\documentclass[12pt]{amsart}

\usepackage{amsfonts,amsthm,amsmath,amssymb,amscd,mathrsfs, enumerate}
 \numberwithin{equation}{section}
\usepackage{graphics}
\usepackage{indentfirst}
\usepackage{cite}
\usepackage{latexsym}
\usepackage[dvips]{epsfig}
\usepackage{color}
\usepackage{bookmark}
\usepackage{esint}
\usepackage{enumitem}
\hypersetup{colorlinks=true,citecolor=blue,linkcolor=red,urlcolor=blue}

\newtheorem{theorem}{Theorem}[section]
\newtheorem{lemma}[theorem]{Lemma}
\newtheorem{pro}[theorem]{Proposition}
\theoremstyle{remark}
\newtheorem{remark}{Remark}[section]

\renewcommand{\div}{\operatorname{div}}
\DeclareMathOperator{\supp}{supp}
\newcommand{\Bv}{\boldsymbol{v}}
\newcommand{\Bu}{\boldsymbol{u}}
\newcommand{\Be}{\boldsymbol{e}}

\newcommand{\Bf}{\boldsymbol{f}}
\newcommand{\BG}{\boldsymbol{G}}
\newcommand{\BmcF}{\boldsymbol{\mathcal{F}}}
\newcommand{\Bw}{\boldsymbol{w}}

\begin{document}

	\title
	[Self-similar solutions with external forces]
	{Self-similar solutions to the steady Navier-Stokes equations in arbitrarily high dimensions}

	\author{Changfeng Gui}
	\address{Department of Mathematics, Faculty of Science, University of Macau, Taipa, Macau}
	\email{changfenggui@um.edu.mo}

	\author{Hao Liu}
	\address{Department of Mathematics, Faculty of Science, University of Macau, Taipa, Macau}
	\email{haoliu@um.edu.mo}

	\author{Yun Wang}
	\address{School of Mathematical Sciences, Center for Dynamical Systems and Differential Equations, Soochow University, Suzhou, China}
	\email{ywang3@suda.edu.cn}

	\author{Chunjing Xie}
	\address{School of Mathematical Sciences,
		Ministry of Education Key Laboratory of Scientific and Engineering Computing,
		and CMA-Shanghai, Shanghai Jiao Tong University, 800 Dongchuan Road, Shanghai, China}
	\email{cjxie@sjtu.edu.cn}

\keywords{Navier-Stokes equations, self-similar solutions, large external forces, existence, uniqueness}
\subjclass[2020]{35Q30, 35C06, 35A01, 35A02, 76D05}

\begin{abstract}
	We study $(-1)$-homogeneous solutions of the steady Navier-Stokes
	equations in $\mathbb R^n\setminus\{0\}$ with $(-3)$-homogeneous,
	locally Lipschitz forces. Previous work \cite{Hao25} established the existence of $(-1)$-homogeneous solutions without any smallness assumption in dimensions $4\leq n\leq16$. In this paper, we remove
	the dimension restriction and obtain the existence of self-similar solutions in all higher
	dimensions. The new  ingredient is a  dimension-independent
	a priori estimate for the total head pressure on the sphere, obtained by adapting the
	 source-flux method and the weighted identity of the pressure of Wang and Yang \cite{WangYang26} to the
	homogeneous setting. This  combines  with the structural relation
	 between the radial velocity and the total head pressure used in \cite{Hao25} to close the a priori  estimates for the solutions in all dimensions. 
\end{abstract}

	\maketitle

	\section{Introduction and main results}
	We consider the steady Navier-Stokes equations in dimension $n$ with external force $\Bf=(f_1,f_2,\ldots,f_n)$:
	\begin{align} \label{NS}
		-\Delta \Bu + (\Bu\cdot \nabla )\Bu + \nabla p=\boldsymbol {f}, \quad \div \Bu=0,
	\end{align}
	where  $\Bu=(u_1,u_2,\ldots,u_n)$ is the velocity field and  $p$ is the pressure. The existence and uniqueness theory for \eqref{NS} is a fundamental topic in partial differential equations; see \cite{Galdi11}.

\subsection{Finite energy solutions}
The existence theory  for solutions with finite Dirichlet energy  goes back to Leray \cite{leray1933etude}. Especially, Leray's arguments  give the existence of weak solutions belonging to $H^1_{loc}$  in any dimension (see also \cite[Chapter 2]{Tsai18}).
 For smooth external forces, energy weak solutions have interior regularity in dimension four \cite{Gerhardt79}, whereas in higher dimensions, this is a open problem. On the other hand, Frehse and R{\r u}\v{z}i\v{c}ka \cite{Frehse94, FrehseRuzicka94, FrehseRuzicka96}, Struwe \cite{Struwe95}  showed the existence of regular solutions on $\mathbb{R}^5$ for any smooth compactly supported forces. Li and Yang \cite[Theorem~1.1]{LiYang22} extended this existence result to  $\mathbb{R}^n$ 
 for $5\leq n\leq15$.  Very recently, Wang and Yang \cite[Theorem 1.1]{WangYang26} proved  the existence of regular solutions for all dimensions $n\geq16$.

\subsection{Existence and uniqueness of self-similar solutions}

A fundamental property of \eqref{NS} is its invariance under the scaling
\begin{equation}\label{eq:sca}
		\begin{aligned}
			\Bu(x)\mapsto \Bu_{\lambda}(x)=\lambda\Bu(\lambda x),\quad
			p(x)\mapsto p_{\lambda}(x)=\lambda^2p(\lambda x), \quad
			\Bf(x)\mapsto \Bf_{\lambda}(x)=\lambda^3\Bf(\lambda x),
		\end{aligned}
	\end{equation}
   for any $\lambda>0$. That is, if $(\Bu,p,\Bf)$ satisfies \eqref{NS}, then $(\Bu_\lambda,p_\lambda,\Bf_\lambda)$ defined in \eqref{eq:sca} also satisfies \eqref{NS}.
In particular, $(\Bu,p)$ is called scale invariant or self-similar if
$(\Bu_{\lambda},p_{\lambda})=(\Bu,p)$ for every $\lambda>0$, and in this case, the force  must also be scale invariant. Self-similar solutions play an important role in understanding the singularity and asymptotic behavior of general solutions of Navier-Stokes system \cite{KorolevSverak11,MiuraTsai12,bang2023rigidity}. Meanwhile, it is easy to see that  self-similar solutions do not have finite energy.

In the unforced case, $\Bf=0$, classical examples of self-similar solutions include the two-dimensional Jeffery-Hamel flows \cite{Jeffery01041915,Hamel17} and the three-dimensional Landau solutions \cite{Landau44, Tian98,Sverak11}. For three-dimensional axisymmetric $(-1)$-homogeneous solutions with singular rays, Li, Li, and Yan \cite{LiLiYan18} classified the no-swirl solutions and constructed solutions with swirl.
For $n\geq4$, every smooth  self-similar velocity field on $\mathbb R^n\setminus\{0\}$ is zero; see \cite{Tsai98b} and \cite[Theorem~3, Section~6]{Sverak11}.
Bang, Gui, Liu, Wang, and Xie \cite{bang2023rigidity} extended this result  to smooth  solutions on $\mathbb R^n\setminus\{0\}$ satisfying $|\Bu(x)|\leq C/|x|$, without assuming self-similarity or smallness of $C$.

For $\Bf\neq0$, there is  an extensive perturbative theory for
small stationary solutions in critical spaces; see
\cite{Kozono95, Phan13, Kaneko19, Tsurumi19}.
Without a size restriction in a scale-invariant space, the nonlinear term can no longer be handled by a perturbative argument.
In dimension $n=4$, Shi \cite[Theorem~1.1]{Shi18} proved the existence of a self-similar weak solution for every $(-3)$-homogeneous force in $L^{4/3,\infty}(\mathbb R^4)$, without a smallness assumption.

Recently, in \cite[Theorem 1.1]{Hao25}, the existence of self-similar
solutions to \eqref{NS} for arbitrary $(-3)$-homogeneous locally Lipschitz forces
was established for $4\leq n\leq16$. That work also obtained
similar existence result in all dimensions $n\geq4$ for purely radial forces with
nonnegative radial component \cite[Theorem~4.1]{Hao25}. The 
dimension restriction in the general case comes from the estimate
of the positive part of the total head pressure and
from the integrability exponent required to close the spherical
energy estimates.

The main contribution of this paper is a new a priori estimate which
enables us to remove the dimension restriction in \cite{Hao25}. Motivated by
the source-flux method and the weighted identity of the pressure 
in
\cite{WangYang26}, we adapt their argument from the finite-energy
setting to the present homogeneous framework. This yields a dimension-independent
	a priori estimate for the positive part of the total head pressure on the sphere.  This,  together  with the important structural relation
	 between the radial velocity and the total head pressure used in \cite{Hao25} (see Lemmas \ref{halfenergyestimate} and \ref{lem:urestim} below), closes the energy estimates for the solutions in all dimensions, and allows the
existence theory to be extended to arbitrarily high dimensions.

 We restrict attention to $n\geq6$, since the four- and five-dimensional cases are covered by \cite{Shi18,Hao25}. Our main theorem gives existence for arbitrary forces and small-force uniqueness. Before stating it, we fix our notation. For a homogeneous vector field $\Bv$ on $\mathbb{R}^n\setminus\{0\}$ and $\alpha\in(0,1)$, the norms $\|\Bv\|_{C(S^{n-1})}$ and $\|\Bv\|_{C^\alpha(S^{n-1})}$ refer to the norms for the functions restricted to the unit sphere. We set
\begin{equation*}
  \|\Bv\|_{\operatorname{Lip}(S^{n-1})}
  :=\|\Bv\|_{L^\infty(S^{n-1})}
  +\|\nabla^{S^{n-1}}\Bv\|_{L^\infty(S^{n-1})}.
\end{equation*}
This norm is equivalent to the usual Lipschitz norm on the sphere. For a homogeneous extension of fixed degree, it is also equivalent to
$\|\Bv\|_{L^\infty(S^{n-1})}+\|\nabla\Bv\|_{L^\infty(S^{n-1})}$.

Our main results are as follows.
	\begin{theorem}\label{thm:main}
	Let $n\geq6$, and let $\Bf$ be a $(-3)$-homogeneous force that is locally Lipschitz on $\mathbb{R}^n\setminus\{0\}$.
    \begin{itemize}
	\item[(i)] There exists at least one self-similar solution $\Bu$ to \eqref{NS} such that $\Bu\in C^{2,\alpha}_{\mathrm{loc}}(\mathbb{R}^n\setminus\{0\})$ for every $\alpha\in(0,1)$ and
		\begin{equation}\label{eq:C2est}
			\|\Bu\|_{C(S^{n-1})} + 	 	\|\nabla \Bu\|_{C(S^{n-1})} + \|\nabla^2 \Bu\|_{C^{\alpha}(S^{n-1})} \leq C,
		\end{equation}
		where $C>0$ depends only on $\alpha$, $n$, and $\|\Bf\|_{\operatorname{Lip}(S^{n-1})}$.
        \item[(ii)] If, in addition, $\Bf$ is smooth on $\mathbb{R}^n\setminus\{0\}$, then the solution obtained in (i) is smooth on $\mathbb{R}^n\setminus\{0\}$.
  \item [(iii)] There exists $\epsilon=\epsilon(n)>0$ such that if
  \[
  \|\Bf\|_{\operatorname{Lip}(S^{n-1})}\leq \epsilon,
  \]
  then the solution is unique among all $(-1)$-homogeneous solutions in $C^2(\mathbb{R}^n\setminus\{0\})$.
    \end{itemize}
\end{theorem}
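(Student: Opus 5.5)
We prove part (i) by a Leray--Schauder degree argument on the unit sphere, so the main task is a uniform a priori bound. Writing $\Bu$ as a $(-1)$-homogeneous field and $p$ as a $(-2)$-homogeneous function, \eqref{NS} becomes an elliptic system on $S^{n-1}$ for $\Bv=\Bu|_{S^{n-1}}$ and $q=p|_{S^{n-1}}$. We solve the linear Stokes-type problem on the sphere by a solution operator $T$; this is possible because for $n\ge 6$ the homogeneous Stokes operator has trivial kernel among $(-1)$-homogeneous fields. We then look for fixed points of
\[
\Bv=\sigma T\bigl(\Bf-(\Bu\cdot\nabla)\Bu\bigr),\qquad \sigma\in[0,1].
\]
The map $T$ gains two derivatives, so it is compact on $C^{1,\alpha}(S^{n-1})$. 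Existence then follows once every solution of the $\sigma$-family is bounded in $C^{1,\alpha}$ independently of $\sigma$.

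We obtain the bound in the following order.
\begin{enumerate}[label=(\arabic*)]
\item \textbf{Bound on the total head pressure.} Set $\Pi=\tfrac12|\Bu|^2+p$, which is $(-2)$-homogeneous. It satisfies $-\Delta\Pi+\Bu\cdot\nabla\Pi=-|\omega|^2+\Bf\cdot\Bu+\dots$, up to vorticity terms. We bound $\sup_{S^{n-1}}\Pi_+$ by a constant depending only on $n$ and $\|\Bf\|_{\operatorname{Lip}}$, without any restriction on $n$. To do this we adapt the source-flux method and the weighted pressure identity of \cite{WangYang26}. We test the pressure equation $-\Delta p=\partial_i\partial_j(u_iu_j)-\div\Bf$ against weights $|x|^{-\beta}\phi(x/|x|)$ localized near the maximum point of $\Pi$. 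Homogeneity turns the radial integrals into explicit constants, leaving spherical integrals. The flux of $\Pi$ through small caps is then controlled by the source term $\Bf\cdot\Bu$.
\item \textbf{Half-energy estimate.} We use the structural relation from \cite{Hao25} between the radial velocity $u_r$ and $\Pi$ (Lemmas \ref{halfenergyestimate}, \ref{lem:urestim}). Integrating $\div\Bu=0$ and the radial momentum equation over $S^{n-1}$ gives an identity of the form
\[
\int_{S^{n-1}}|\nabla\Bu|^2 + c_n\int_{S^{n-1}}|\Bu|^2 = \int_{S^{n-1}}(\Pi\, u_r + \Bf\cdot\Bu)\, dS.
\]
The one-sided bound on $\Pi_+$ from (1) replaces the dimension-dependent $L^q$ step that was used in \cite{Hao25}. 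Combined with a one-sided control of $u_r$, it yields an $H^1(S^{n-1})$ bound.
\item \textbf{Bootstrap to $C^{1,\alpha}$.} An $H^1$ bound alone is too weak to bootstrap when $n$ is large. We therefore also use the pointwise bound on $\Pi_+$: combined with the maximum principle for $\Pi$, it gives $\Pi$ bounded from both sides. This in turn gives $|\Bu|^2\lesssim 1+|p|$, with $p$ controlled by Calderón--Zygmund estimates on the sphere. From there we climb to $L^\infty$ and then to $C^{1,\alpha}$ via elliptic regularity.
\end{enumerate}

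Schauder theory then upgrades the fixed point to the bound \eqref{eq:C2est}, since $\Bf$ is Lipschitz and hence in $C^\alpha$. For part (ii), repeated Schauder bootstrapping gives smoothness when $\Bf$ is smooth.

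For part (iii), suppose $\Bu_1,\Bu_2$ are two $C^2$ solutions. Their difference $\Bw$ satisfies a linear Stokes system with the terms $(\Bu_1\cdot\nabla)\Bw+(\Bw\cdot\nabla)\Bu_2$. The a priori estimates above show that any solution is bounded in $C^1$ by $C\|\Bf\|_{\operatorname{Lip}}$ when the force is small; for this we need the bounds of (1)--(3) to be linear in small $\Bf$, which a small-data contraction gives. The coercive spherical energy identity for $\Bw$, with constant $c_n>0$ for $n\ge 6$, then absorbs these terms when $\epsilon$ is small, so $\Bw=0$.

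I expect step (1) to be the main obstacle. The source-flux weights must be chosen compatibly with $(-2)$-homogeneity, and the boundary terms at $0$ and at $\infty$ must cancel through the choice of exponent $\beta$. I would first try $\beta=n-4$, so that $|x|^{-\beta}\Pi$ has a scale-invariant integrand.
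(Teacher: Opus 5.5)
\textbf{The gap is in step (1).} Your outline matches the paper at the top level: Leray--Schauder degree on the sphere, with a priori bounds from the head pressure and the structural relation of \cite{Hao25}. But step (1), which is the only genuinely new ingredient, does not work as you propose it.

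Your weights $|x|^{-\beta}\phi(x/|x|)$ are homogeneous about the origin. On a homogeneous solution the tested pressure equation therefore factors into $\int_0^\infty\xi(r)r^{-1}\,dr$ times a spherical identity. Take $\beta=n-4$ and $\phi\equiv1$, and use $\partial_i\partial_j|x|^{4-n}=-(n-4)\delta_{ij}|x|^{2-n}+(n-4)(n-2)x_ix_j|x|^{-n}$. You get exactly
\[
(n-2)\int_{S^{n-1}}(u^r)^2\,d\sigma-2\int_{S^{n-1}}H\,d\sigma=\int_{S^{n-1}}f^r\,d\sigma .
\]
This is the identity behind Lemma~\ref{lem:urestim}, which \cite{Hao25} already used and which is where the dimension restriction came from. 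The coercive term only sees $u^r=\Bu\cdot x$. Localizing with $\phi$ adds only sign-indefinite terms in $\nabla^{S^{n-1}}\phi$, $u_iu_j$ and $p$, with no gain at small scales.

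The missing idea is to put the singularity at a point $y\neq0$ of a fixed annulus. Testing with $\zeta(x-y)|x-y|^{4-n}$ gives the coercive term $(n-4)(n-2)\int|\Bu\cdot(x-y)|^2|x-y|^{-n}\zeta$ against $2(n-4)\int H\zeta|x-y|^{2-n}$. The latter is at most $C\mathcal H$, because $H\le H_+$ and $\theta>n/2$. Averaging over $y\in B_{2\rho}(z)$ then controls all directions and yields the Morrey bound $\rho^{2-n}\int_{B_\rho(z)}|\Bu|^2\le C(\mathcal E+\mathcal H+N_f)$. That bound has three consequences:
\begin{enumerate}
\item the cut-off $|\Bf||\Bu|$ has finite $m_1$ seminorm, so $\nabla(-\Delta)^{-1}$ of it lies in BMO (Lemma~\ref{lem:morreyBMO});
\item with John--Nirenberg and homogeneity, the flux $\BmcF=\nabla(-\Delta)^{-1}(|\Bf||\Bu|)+\Bf$ is bounded in $L^n(S^{n-1})$;
\item testing $-\Delta H+\Bu\cdot\nabla H\le-\div\BmcF$ with $H_+^{(n-4)/2}$ gives $\mathcal H\le C\|\BmcF\|_{L^n(S^{n-1})}$.
\end{enumerate}
Your phrase ``flux through small caps controlled by $\Bf\cdot\Bu$'' does not supply a substitute for this chain.

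\textbf{The order of steps (1) and (2) must change.} You claim $\sup\Pi_+\le C(n,\|\Bf\|_{\operatorname{Lip}})$ before any energy bound. That cannot be obtained in isolation, since the source $\Bf\cdot\Bu$ involves the unknown $\Bu$. What the argument actually gives is an energy-dependent $L^\theta$ bound, $\mathcal H\le C(N_f+N_f^2+N_f\mathcal E^{1/2})$; the $\mathcal H^{1/2}$ coming from the Morrey step is absorbed by Young's inequality. This must be closed jointly with $\mathcal E\le CN_f\mathcal E^{1/2}+C(\mathcal H+N_f)^{3/2}$. Substituting produces the sublinear power $\mathcal E^{3/4}$, which is then absorbed.

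\textbf{Smaller errors.}
\begin{enumerate}
\item In step (3), the maximum principle for the subsolution $\Pi$ gives only an upper bound, not a two-sided one.
\item Also in step (3), pairing $|\Bu|^2\lesssim1+|p|$ with $\|p\|_{L^\beta}\lesssim\|\Bu\|_{L^{2\beta}}^2+\|\Bf\|_{L^\beta}$ is circular with no small constant, so it does not climb to $L^\infty$. The inputs handed to \cite{Hao25} for regularity are the energy and pressure bounds together with $H_+\in L^\theta$, $\theta>n/2$.
\item In (iii), a small-data contraction gives uniqueness only in a small ball. Uniqueness among all $C^2$ solutions needs the a priori bounds themselves to vanish as $N_f\to0$. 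The estimates of Section~\ref{sec:mainresult} provide this, with $\mathcal E\le C(N_f^{3/2}+N_f^6)$. Linearity in $\Bf$ is not needed.
\end{enumerate}
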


	\begin{remark}
		For $6\leq n\leq16$, Theorem~\ref{thm:main} recovers the corresponding
		result of \cite{Hao25}. The  new range is $n\geq17$. Moreover, there might not exist solutions of \eqref{NS} if $n=2$ (see \cite[Section 5]{Hao25})  or $n=3$ (see \cite{Shi18}). Hence, Theorem \ref{thm:main} together with the results in \cite{Hao25, Shi18} gives a clear description of the existence  of self-similar solutions to \eqref{NS}.
\end{remark}

\subsection{Main ideas of the proof}\label{sec:mainidea}

The  proof is based on the Leray-Schauder degree argument
 in \cite{Hao25}; the main issue is therefore to obtain enough
a priori bound of solutions depending only on the force. Let
\[
H=\frac{|\Bu|^2}{2}+p
\]
be the total head pressure, and denote its positive part by
$H_+=\max\{0,H\}$. Let
\begin{equation}\label{defunifiedquantities}
	\mathcal E
	:=\int_{S^{n-1}}
	\left(|\nabla\Bu|^2+(n-4)|\Bu|^2\right)\,d\sigma,
	\quad
	\mathcal H:=\|H_+\|_{L^\theta(S^{n-1})},
	\quad
	N_f:=\|\Bf\|_{\operatorname{Lip}(S^{n-1})},
\end{equation}
where
\[
\theta=\frac{(n-2)(n-1)}{2(n-3)}>\frac{n}{2}.
\]
The spherical energy identity, together with the relation between the
radial velocity and the total head pressure observed in
\cite{Hao25} (see Lemmas \ref{halfenergyestimate} and \ref{lem:urestim} below), yields
\begin{equation}\label{eq:energyineq}
	\mathcal E
	\leq C N_f\mathcal E^{\frac{1}{2}}
	+C(\mathcal H+N_f)^{\frac{3}{2}},
\end{equation}
see also \eqref{eq:Eclosureunified} below.
Thus the main task is to control $\mathcal H$ and eventually close the 
a priori bound of $\mathcal E$, depending only on $N_f$.
The earlier argument in \cite{Hao25} estimates $\mathcal H$ by using
the equation of $H$ 
\begin{equation*}
    -\Delta \, H
	+ \Bu \cdot \nabla H
	= - \frac{1}{2} |\partial_i u_j - \partial_j u_i |^2
	+\Bf\cdot \Bu - \div \Bf \quad \text{in }\mathbb{R}^n \setminus \{0\},
\end{equation*}
which formally implies that
$H$ satisfies the maximum principle.  Here and below, repeated Cartesian indices are summed from $1$ to $n$.
  The way to utilize this maximum principle under the homogeneous setting is to test the equation by 
$H_+^{\frac{n-4}{2}}$, inspired by \cite{Tsai98b}.  However, the resulting integrability exponent  imposes an upper restriction on the dimension.

To remove this dimension
restriction, we rewrite the source term in divergence form and introduce
the Bernoulli source flux
\begin{equation}\label{eq:defF}
	\BmcF
	:=\nabla(-\Delta)^{-1}(|\Bf||\Bu|)+\Bf .
\end{equation}
Using the same  multiplier 
$H_+^{\frac{n-4}{2}}$ as in \cite{Hao25}, one can easily obtain that  (see Lemma \ref{lem:fluxBernoulli})
\[
\mathcal H
\leq C(n)\|\BmcF\|_{L^n(S^{n-1})}.
\]
Here comes the key new estimate (see Lemma \ref{lem:sourcefluxSS}),
\begin{equation}\label{eq:keyest}
    \|\BmcF\|_{L^n(S^{n-1})}
    \leq CN_f\left(1+N_f^{\frac{1}{2}}+\mathcal E^{\frac{1}{2}}+\mathcal H^{\frac{1}{2}}\right).
\end{equation}
Young's inequality then gives
\begin{equation}\label{eq:estforH+}
   \mathcal H
\leq C\bigl(
N_f+N_f^2+N_f\mathcal E^{\frac{1}{2}}
\bigr). 
\end{equation}
The key feature of \eqref{eq:estforH+} is that this  estimate is independent of the dimension $n$ modulo a constant $C$ and improves the dimension-dependent estimates of $\mathcal{H}$ in \cite[Lemma 2.3]{Hao25}.
The  proof of  \eqref{eq:keyest} adapts
the weighted identity of the pressure in
\cite{WangYang26} to the homogeneous setting. More precisely,  a localized
endpoint-weighted estimate for the pressure  gives Morrey control of the velocity on fixed annuli. This
combines with the $L^2$-BMO estimate of the Newtonian potential and the homogeneity of $\BmcF$, yielding the
required  $L^n$ bound.

Substituting \eqref{eq:estforH+} into \eqref{eq:energyineq} produces  a sublinear
power $\mathcal E^{\frac{3}{4}}$ on the right-hand side, which can be absorbed
by Young's inequality. This gives the desired a priori estimate in
every dimension $n\geq6$ and removes the dimensional restriction in
\cite{Hao25}. Once the energy estimates for $u$ are obtained, \eqref{eq:estforH+}  directly gives higher integrability of $H_+$ beyond the critical exponent $\frac{n}{2}$.
With these estimates available, the construction of solutions using the 
Leray-Schauder argument is the same as  \cite{Hao25}.

\subsection {Organization of  the paper}
The rest of this paper is organized as follows: we first recall some key estimates from \cite{Hao25}
in Section \ref{sec:A-priori}. We also give the bound of $ \mathcal{H}=\|H_+\|_{L^\theta(S^{n-1})}$  in terms of  the Bernoulli source flux $\BmcF$ in this section. In Section \ref{subsec:morrey}, we prove the desired bound of $\BmcF$ claimed in \eqref{eq:keyest}. Finally, we close the energy estimates and prove the main result in Section \ref{sec:mainresult}.

	\section{Some Preliminaries and Estimates for the total head pressure}\label{sec:A-priori}

Most of the estimates in this section appeared in \cite{Hao25}. They are
 valid for all dimensions.
 We first recall some important estimates    from \cite{Hao25}, then we bound the total head pressure in terms of the Bernoulli source flux $\BmcF$. 

\subsection{Pressure estimates and spherical energy estimates}
\label{subsec_EE}

	We first recall the estimates for the pressure  from \cite[Lemma~2.1]{Hao25}. These  estimates follow from the properties of the Riesz operator and the  singular integral bounds for homogeneous functions in \cite[Lemma~A.1]{Hao25}. We state these facts in the form used below, without repeating their proofs.
	\begin{lemma}[Pressure   estimates]\label{Lemmapressure}
Assume $n\geq6$, $\Bf$ is $(-3)$-homogeneous and locally Lipschitz on $\mathbb{R}^n\setminus\{0\}$. Let $(\Bu,p)$ solve \eqref{NS} in the sense of distributions, with $\Bu\in C^2(\mathbb{R}^n\setminus\{0\})$ being homogeneous of degree $-1$. Then $p\in C^1(\mathbb{R}^n\setminus\{0\})$,  is unique up to a constant, and can be chosen as $(-2)$-homogeneous. For this $p$ and every $1<\beta<\infty$, we have
\begin{align}
\label{eq:pressure-potential-estimates}
 \|p\|_{L^\beta(S^{n-1})}
 &\leq C(n,\beta)\left(
 \|\Bu\|_{L^{2\beta}(S^{n-1})}^2
 +\|\Bf\|_{L^\beta(S^{n-1})}\right),\\
 \|\nabla p\|_{L^\beta(S^{n-1})}
 &\leq C(n,\beta)\left(
 \|\Bu\cdot\nabla\Bu\|_{L^\beta(S^{n-1})}
 +\|\Bf\|_{L^\beta(S^{n-1})}\right).
\end{align}
	\end{lemma}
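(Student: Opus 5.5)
Taking the divergence of \eqref{NS} and using $\div\Bu=0$, I would start from the Poisson equation
\[
-\Delta p=\partial_i\partial_j(u_iu_j)-\div\Bf \quad\text{in }\mathcal D'(\mathbb R^n\setminus\{0\}).
\]
Since $\Bu$ is $(-1)$-homogeneous, $u_iu_j$ is $(-2)$-homogeneous. For $n\geq 6$ it lies in $L^1_{loc}(\mathbb R^n)$ and has tempered decay, so it defines a tempered distribution on all of $\mathbb R^n$. Likewise $\Bf$ is $(-3)$-homogeneous and locally integrable near the origin because $3<n$. I therefore define the candidate
\[
\tilde p:=R_iR_j(u_iu_j)+(-\Delta)^{-1}\div\Bf ,
\]
with $R_i$ the Riesz transforms. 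By scaling, $\tilde p$ is $(-2)$-homogeneous: Riesz transforms commute with dilations, and $(-\Delta)^{-1}\div$ lowers the homogeneity degree by one. The first step is to check that $\nabla(p-\tilde p)$ is harmonic on $\mathbb R^n\setminus\{0\}$. This follows because $\nabla p=\Delta\Bu-\Bu\cdot\nabla\Bu+\Bf$ is $(-3)$-homogeneous and continuous away from $0$, and $\nabla\tilde p$ is the Leray-projected version of the same expression. As a distribution on $\mathbb R^n$, $\nabla(p-\tilde p)$ is then homogeneous of degree $-3>-n$ and harmonic off the origin. Any such distribution supported at the origin would be a combination of derivatives of $\delta$, which have degree $\leq -n$, so none can occur. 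Hence it is a homogeneous harmonic function on all of $\mathbb R^n$ of negative degree, and therefore vanishes. This gives uniqueness up to a constant and the $(-2)$-homogeneous choice $p=\tilde p$.

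For regularity, I would observe that $\nabla p=\Delta\Bu-\Bu\cdot\nabla\Bu+\Bf$ is continuous on $\mathbb R^n\setminus\{0\}$, because $\Bu\in C^2$ and $\Bf$ is Lipschitz. Hence $p\in C^1$ away from the origin.

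For the estimates, the key tool is the cited singular integral bound \cite[Lemma~A.1]{Hao25}. For a $(-k)$-homogeneous $g$ with $k<n$, a Calderón–Zygmund operator $T$ of homogeneous type satisfies $\|Tg\|_{L^\beta(S^{n-1})}\leq C(n,\beta)\|g\|_{L^\beta(S^{n-1})}$ for $1<\beta<\infty$. I would verify this by splitting $\mathbb R^n$ into dyadic annuli and using homogeneity to sum. The sum converges precisely because the degrees of $u_iu_j$ and $\Bf$ lie strictly between $-n$ and $0$, and this is where $n\geq 6$ (indeed $n\geq4$) suffices. Applying this bound to $R_iR_j(u_iu_j)$ gives the $\|\Bu\|_{L^{2\beta}}^2$ term.

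The main obstacle is the force term $(-\Delta)^{-1}\div\Bf$, which is not a zero-order operator. Here I would write $\div\Bf=\partial_k f_k$, so the term equals $\partial_k(-\Delta)^{-1}f_k$, which is a Riesz potential of order one applied to a $(-3)$-homogeneous function. Restricted to homogeneous functions, this operator maps degree $-3$ to degree $-2$. Its $L^\beta(S^{n-1})$ boundedness again follows from the annular decomposition: the kernel $|x-y|^{1-n}$ is integrable near the diagonal, and the tail sums converge since $3<n$. Finally, for the gradient estimate I would write $\nabla p=-\nabla R_iR_j$-type expressions. It is cleaner to use $\partial_l p=R_lR_j(\Bu\cdot\nabla u_j)+R_lR_k f_k$, valid since $\div\Bu=0$ gives $\partial_i(u_iu_j)=\Bu\cdot\nabla u_j$. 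Both terms are zero-order operators acting on $(-3)$-homogeneous functions, and the same lemma yields the stated bound.
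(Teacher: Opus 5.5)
Your proposal is correct and follows the route the paper relies on by citation to \cite[Lemma~2.1]{Hao25}: you represent $p$ through Riesz transforms of $u_iu_j$ and $\Bf$, and you bound these operators on homogeneous functions as in \cite[Lemma~A.1]{Hao25}; your near-annulus/far-tail splitting reproduces that bound because the degrees lie in $(-n,0)$. Two fixes are cosmetic. First, since $R_iR_j=(-\Delta)^{-1}\partial_i\partial_j$, the force terms should carry a minus sign, $-(-\Delta)^{-1}\div\Bf$ and $-R_lR_kf_k$. Second, in your Liouville step the comparison that matters is that $\Delta\nabla(p-\tilde p)$ is homogeneous of degree $-5>-n$, which holds because $n\geq 6$.
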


 Throughout the rest of the paper, we fix this $p$ to be  a $(-2)$-homogeneous function.  
We use operators $\nabla^{S^{n-1}}$ and $\Delta_{S^{n-1}}$ to denote the spherical gradient and Laplace-Beltrami operator on the sphere. Norms of homogeneous functions or vector fields on the sphere are always referred to the corresponding norms of their restrictions on the unit sphere. To express the equations on the sphere, set $r=|x|$, $\Be_r=\frac{x}{r}$, and write
\begin{equation*}
 \Bu=\frac{\Bu^t+u^r\Be_r}{r},\qquad
 \Bf=\frac{\Bf^t+f^r\Be_r}{r^3},\qquad
 p=\frac{\bar p}{r^2}.
\end{equation*}
Here $\Bu^t$ and $u^r\Be_r$ are the tangential and radial components of $r\Bu$, while $\Bf^t$ and $f^r\Be_r$ are those of $r^3\Bf$. The functions $\Bu^t,u^r,\Bf^t,f^r$, and $\bar p$ are all $0$-homogeneous.
We use the following quantity throughout the paper,
\begin{equation}\label{eq:theta}
 \theta:=\frac{(n-2)(n-1)}{2(n-3)}
 =\frac n2+\frac{1}{n-3}>\frac n2.
\end{equation}
Let $\omega_n$ denote the volume of the unit ball in $\mathbb R^n$. 
In this section, $C$ denotes a positive constant depending only on $n$, unless otherwise specified.

The next two spherical estimates were also established in \cite{Hao25}, where the structural relation
	 between the radial velocity and the total head pressure on the sphere (see \cite[Equation (5)]{Hao25}) plays a crucial role.

\begin{lemma}[{\cite[Lemma 2.2]{Hao25}}]\label{halfenergyestimate}
Assume $n\geq6$. Every self-similar solution $\Bu\in C^2(\mathbb{R}^n\setminus\{0\})$ of \eqref{NS} satisfies
 \begin{equation}\label{eq:keydec-2}
     \begin{aligned}
       &\int_{S^{n-1}} |\nabla \Bu|^2 +(n-4)|\Bu|^2 d\sigma
    \leq 	\int_{S^{n-1}} \Bf\cdot \Bu + (n-4)H_{+} u^r_{-}\,  d\sigma + \frac{n-4}{2} \int_{S^{n-1}} f^r u^r_{+}\, d\sigma.
     \end{aligned}
 \end{equation}
\end{lemma}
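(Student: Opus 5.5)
The plan is to derive an exact spherical energy identity from the equation using homogeneity, and then to estimate the single bad term, $-(n-4)\int_{S^{n-1}} u^r H\,d\sigma$. We split it according to the sign of $u^r$. Where $u^r<0$ we keep $H_+$. Where $u^r>0$ we replace $H$ by the radial component of the momentum equation, which is the structural relation of \cite{Hao25}.

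\emph{Step 1: energy identity.} Since $\div\Bu=0$, we have $-\Delta\Bu\cdot\Bu=-\Delta\frac{|\Bu|^2}{2}+|\nabla\Bu|^2$ and $\Bu\cdot\nabla\Bu\cdot\Bu+\Bu\cdot\nabla p=\div(\Bu H)$. Dotting \eqref{NS} with $\Bu$ therefore gives, pointwise in $\mathbb{R}^n\setminus\{0\}$,
\[
|\nabla\Bu|^2=\Bf\cdot\Bu+\Delta\tfrac{|\Bu|^2}{2}-\div(\Bu H).
\]
We integrate this over the annulus $\{1<|x|<2\}$. The terms $|\nabla\Bu|^2$ and $\Bf\cdot\Bu$ are $(-4)$-homogeneous, so their annulus integrals equal $c_n\int_{S^{n-1}}(\cdot)\,d\sigma$ with $c_n=\frac{2^{n-4}-1}{n-4}$. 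For the boundary terms, $\partial_r\frac{|\Bu|^2}{2}=-\frac{|\Bu|^2}{r}$ and $\Bu\cdot\Be_r\,H=r^{-3}u^r\bar H$, where $\bar H=r^2H$. Hence the flux through $\{|x|=r\}$ equals $r^{n-4}$ times the corresponding spherical integral. Taking the difference between $r=2$ and $r=1$ and dividing by $c_n$ yields the identity
\[
\int_{S^{n-1}}|\nabla\Bu|^2+(n-4)|\Bu|^2\,d\sigma=\int_{S^{n-1}}\Bf\cdot\Bu\,d\sigma-(n-4)\int_{S^{n-1}}u^rH\,d\sigma .
\]

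\emph{Step 2: the radial equation.} We take the inner product of \eqref{NS} with $x$ and use three facts: $x\cdot\Delta\Bu=\Delta(x\cdot\Bu)$ (valid because $\div\Bu=0$), $x\cdot(\Bu\cdot\nabla\Bu)=\Bu\cdot\nabla(x\cdot\Bu)-|\Bu|^2$, and $x\cdot\nabla p=-2p$. Restricting to $S^{n-1}$, where $x\cdot\Bu=u^r$ is $0$-homogeneous, gives
\[
2H=-\Delta_{S^{n-1}}u^r+\Bu^t\cdot\nabla^{S^{n-1}}u^r-f^r\quad\text{on }S^{n-1}.
\]
On the sphere, incompressibility reads $\div_{S^{n-1}}\Bu^t=-(n-2)u^r$.

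\emph{Step 3: sign splitting.} Write $-u^rH=-u^r_+H+u^r_-H$. The second piece satisfies $u^r_-H\le u^r_-H_+$ pointwise. For the first piece we substitute Step 2 and integrate by parts on $S^{n-1}$. The Laplacian gives $\int u^r_+\Delta_{S^{n-1}}u^r\,d\sigma=-\int|\nabla^{S^{n-1}}u^r_+|^2\,d\sigma\le0$. The transport term gives
\[
\int u^r_+\,\Bu^t\cdot\nabla^{S^{n-1}}u^r\,d\sigma=-\tfrac12\int\div_{S^{n-1}}\Bu^t\,(u^r_+)^2\,d\sigma=\tfrac{n-2}{2}\int(u^r_+)^3\,d\sigma\ge0.
\]
Both of these enter $-\int u^r_+H\,d\sigma$ with a negative sign. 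Hence $-\int u^r_+H\,d\sigma\le\frac12\int f^ru^r_+\,d\sigma$. Multiplying by $n-4\ge0$ and inserting into Step 1 gives \eqref{eq:keydec-2}.

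The main obstacle is Step 3: the integration by parts must produce terms with favorable signs. This hinges on the cubic term coming from incompressibility having the right sign when tested against $u^r_+$, and the computation above indicates that it does. The remaining technical points are routine. First, $u^r_+\in H^1(S^{n-1})$ is Lipschitz, which justifies integrating by parts against $\Delta_{S^{n-1}}u^r$. Second, $p\in C^1$ by Lemma \ref{Lemmapressure}, which makes the divergence-theorem manipulations in Step 1 legitimate on the annulus.
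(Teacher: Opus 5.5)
Your argument is correct. The spherical energy identity $\int_{S^{n-1}}|\nabla\Bu|^2+(n-4)|\Bu|^2\,d\sigma=\int_{S^{n-1}}\Bf\cdot\Bu\,d\sigma-(n-4)\int_{S^{n-1}}u^rH\,d\sigma$ and the radial relation $2H=-\Delta_{S^{n-1}}u^r+\Bu^t\cdot\nabla^{S^{n-1}}u^r-f^r$ both check out, and testing the latter against $u^r_+$ gives terms with the signs you claim. This is essentially the route the paper relies on through \cite{Hao25}, which gives no separate proof of its own. There, the same structural relation between $u^r$ and $H$ is the key ingredient, and integrating it over the sphere also gives Lemma~\ref{lem:urestim}.
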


The estimate of radial velocity  below is \cite[Equation (25)]{Hao25}, obtained there in the proof of \cite[Lemma 2.4]{Hao25}.
\begin{lemma}\label{lem:urestim}
Assume $n\geq6$. Every self-similar solution $\Bu\in C^2(\mathbb R^n\setminus\{0\})$ of \eqref{NS} satisfies
\begin{equation}\label{eq:L2estur}
 (n-2)\|u^r\|_{L^2(S^{n-1})}^2+2\|H_-\|_{L^1(S^{n-1})}
 \leq 2\|H_+\|_{L^1(S^{n-1})}+\|f^r\|_{L^1(S^{n-1})},
\end{equation}
where $H_-=-\min\{0,H\}$.
\end{lemma}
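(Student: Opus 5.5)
The plan is to derive an exact scalar identity on $S^{n-1}$ from the divergence form of \eqref{NS}, tested against the position vector $x$; the inequality \eqref{eq:L2estur} then follows by splitting $H=H_+-H_-$. By Lemma~\ref{Lemmapressure}, $p\in C^1$ is $(-2)$-homogeneous, so all manipulations below are classical on $\mathbb R^n\setminus\{0\}$. I work on annuli $A_R=\{1<|x|<R\}$ to avoid the origin.

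\emph{Step 1 (virial-type identity).} Write \eqref{NS} as $\partial_j T_{ij}=f_i$ with $T_{ij}=u_iu_j+p\delta_{ij}-\partial_j u_i$, using $\div\Bu=0$. Then
\[
\partial_j(x_iT_{ij}) = T_{ii}+x\cdot\Bf = |\Bu|^2+np+x\cdot\Bf,
\]
since $\operatorname{tr}\nabla\Bu=\div\Bu=0$. The right-hand side is $(-2)$-homogeneous, so its integral over $A_R$ equals
\[
\int_1^R r^{n-3}\,dr\int_{S^{n-1}}\bigl(|\Bu|^2+n\bar p+f^r\bigr)\,d\sigma .
\]
The field $x_iT_{ij}$ is $(-1)$-homogeneous, so its outward flux through $\partial B_r$ is $r^{n-2}\int_{S^{n-1}}x_iT_{ij}x_j\,d\sigma$. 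Differentiating the divergence theorem identity in $R$ therefore gives
\[
(n-2)\int_{S^{n-1}}x_iT_{ij}x_j\,d\sigma=\int_{S^{n-1}}\bigl(|\Bu|^2+n\bar p+f^r\bigr)\,d\sigma .
\]

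\emph{Step 2 (evaluating the flux).} On $S^{n-1}$ we have $x_iu_iu_jx_j=(u^r)^2$ and $x_ix_jp\,\delta_{ij}=\bar p$. By Euler's relation $x_j\partial_ju_i=-u_i$ for $(-1)$-homogeneous $\Bu$, so $x_ix_j\partial_ju_i=-u^r$. Hence
\[
(n-2)\int_{S^{n-1}}\bigl((u^r)^2+\bar p+u^r\bigr)\,d\sigma=\int_{S^{n-1}}\bigl(|\Bu|^2+n\bar p+f^r\bigr)\,d\sigma .
\]
The incompressibility condition on the sphere reads $\div_{S^{n-1}}\Bu^t+(n-2)u^r=0$, which gives $\int_{S^{n-1}}u^r\,d\sigma=0$. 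Rearranging yields the structural identity
\[
(n-2)\|u^r\|_{L^2(S^{n-1})}^2=2\int_{S^{n-1}}H\,d\sigma+\int_{S^{n-1}}f^r\,d\sigma ,
\]
with $H=\frac{|\Bu|^2}{2}+\bar p$ on the unit sphere.

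\emph{Step 3 (conclusion).} Substitute $\int H=\|H_+\|_{L^1}-\|H_-\|_{L^1}$ and bound $\int f^r\le\|f^r\|_{L^1}$. This gives \eqref{eq:L2estur}, in fact with equality up to the last step.

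The main point needing care is Step 2: getting the signs and the homogeneity bookkeeping right, in particular the viscous term $-x_ix_j\partial_ju_i=u^r$ and the fact that it integrates to zero. One must also justify the divergence theorem on the annulus with only $\Bu\in C^2$ and $p\in C^1$; this regularity suffices, since $T$ is then $C^1$ away from the origin.
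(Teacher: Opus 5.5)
Your argument is correct. It in fact proves the identity $(n-2)\|u^r\|_{L^2(S^{n-1})}^2=2\int_{S^{n-1}}H\,d\sigma+\int_{S^{n-1}}f^r\,d\sigma$, from which \eqref{eq:L2estur} follows by splitting $H=H_+-H_-$. The flux computation, the evaluation $x_ix_j\partial_ju_i=-u^r$ via Euler's relation, and the homogeneity bookkeeping all check out.

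The paper does not reprove the lemma; it quotes it from \cite{Hao25}, where it comes from the pointwise structural relation between $u^r$ and $H$ on the sphere. Evaluating $x\cdot$\eqref{NS} at $|x|=1$ gives
\[
-\Delta_{S^{n-1}}u^r+\Bu^t\cdot\nabla^{S^{n-1}}u^r=2H+f^r\quad\text{on }S^{n-1}.
\]
Integrating over $S^{n-1}$, with $\div_{S^{n-1}}\Bu^t=-(n-2)u^r$ turning the convection term into $(n-2)\|u^r\|_{L^2(S^{n-1})}^2$, gives the same identity.

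Your virial computation is this radial momentum balance tested against the constant $1$, but carried out in Euclidean divergence form on annuli. You therefore need only Euler's relation, the divergence theorem and $\int_{S^{n-1}}u^r\,d\sigma=0$, and never have to write $\Delta$ or the convection term in spherical coordinates.

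The pointwise form buys more: it can be tested against functions of $u^r$. For example, testing against $u^r_+$, and using $\int_{S^{n-1}}|\nabla^{S^{n-1}}u^r_+|^2\,d\sigma\geq0$ together with $\int_{S^{n-1}}(\Bu^t\cdot\nabla^{S^{n-1}}u^r)\,u^r_+\,d\sigma=\frac{n-2}{2}\int_{S^{n-1}}(u^r_+)^3\,d\sigma\geq0$, yields $-\int_{S^{n-1}}Hu^r_+\,d\sigma\leq\frac12\int_{S^{n-1}}f^ru^r_+\,d\sigma$. Combined with the spherical energy identity $\mathcal E+(n-4)\int_{S^{n-1}}Hu^r\,d\sigma=\int_{S^{n-1}}\Bf\cdot\Bu\,d\sigma$, this gives exactly Lemma~\ref{halfenergyestimate}. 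Your integrated identity captures only the spherical mean of that relation, which is all the present lemma requires.
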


The total head pressure satisfies, in distributions,
\begin{align} \label{eq:headpre2}
	-\Delta \, H
	+ \Bu \cdot \nabla H
	= - \frac{1}{2} |\partial_i u_j - \partial_j u_i |^2
	+\Bf\cdot \Bu - \div \Bf \quad \text{in }\mathbb{R}^n \setminus \{0\}.
\end{align}
To remove the dimension restriction in \cite{Hao25},
the first key step is to rewrite the last two terms above into a divergence form  $\div 	\BmcF$ as mentioned in Section \ref{sec:mainidea}. We have
\begin{lemma}[Estimates for the total head pressure]\label{lem:fluxBernoulli}
	Assume $n\geq6$. Let $H\in C^1(\mathbb R^n\setminus\{0\})$ be $(-2)$-homogeneous, let $\Bu\in C^1(\mathbb R^n\setminus\{0\})$ be $(-1)$-homogeneous and divergence free, and let $\BmcF\in L^n_{\mathrm{loc}}(\mathbb R^n\setminus\{0\})$ be $(-3)$-homogeneous. Suppose that
	\begin{equation}\label{eq:BernoulliFluxIneq}
		-\Delta H+\Bu\cdot\nabla H\leq-\div\BmcF
		\quad\text{in }\mathbb R^n\setminus\{0\}
	\end{equation}
	in the sense of distributions. Then
	\begin{equation}\label{eq:Hbyflux}
	\mathcal{H} = \|H_+\|_{L^\theta(S^{n-1})}
		\leq C(n)\|\BmcF\|_{L^n(S^{n-1})}
	\end{equation}
	where $\theta$ is defined by \eqref{eq:theta}.
\end{lemma}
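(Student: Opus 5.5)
The plan is to test \eqref{eq:BernoulliFluxIneq} against the multiplier $H_+^{q}$ with $q=\frac{n-4}{2}\geq1$, multiplied by a radial cutoff, and to use homogeneity so that every boundary-type term drops out. What remains should be a weighted Dirichlet inequality on $S^{n-1}$, which a Sobolev inequality on the sphere turns into an $L^\theta$ bound.

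Fix $\eta\in C_c^\infty((1/2,2))$ with $\eta\geq0$ and $\int_0^\infty\eta(r)\,\frac{dr}{r}=1$, and take $\varphi=\eta(|x|)H_+^{q}$. Since $q\geq1$ and $H\in C^1$, $\varphi$ is nonnegative, Lipschitz and compactly supported in $\mathbb R^n\setminus\{0\}$. Because $\BmcF\in L^n_{\rm loc}$, a standard mollification argument should make $\varphi$ admissible in the distributional inequality. This gives
\[
q\int\eta H_+^{q-1}|\nabla H_+|^2+\int H_+^q\nabla H\cdot\nabla\eta+\int\eta H_+^q\Bu\cdot\nabla H\leq q\int\eta H_+^{q-1}\BmcF\cdot\nabla H_++\int H_+^q\BmcF\cdot\nabla\eta .
\]
I would then treat the terms by homogeneity degree.

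\emph{Degree $-n$ terms.} The terms $\eta H_+^{q-1}|\nabla H_+|^2$ and $\eta H_+^{q-1}\BmcF\cdot\nabla H_+$ are of degree $-n$ apart from $\eta$. By polar coordinates and the normalization of $\eta$, their integrals equal the corresponding integrals over $S^{n-1}$.

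\emph{Second term on the left.} It equals $\frac{1}{q+1}\int\nabla H_+^{q+1}\cdot\nabla\eta=-\frac{1}{q+1}\int H_+^{q+1}\Delta\eta$. Here $H_+^{q+1}$ has degree $-(n-2)$, so the integral reduces to a spherical integral times
\[
\int_0^\infty r^{-(n-2)}\Bigl(\eta''+\tfrac{n-1}{r}\eta'\Bigr)r^{n-1}dr=\int_0^\infty(r\eta''+(n-1)\eta')\,dr=0 .
\]

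\emph{Convection term.} Using $\div\Bu=0$, it equals $-\frac1{q+1}\int H_+^{q+1}u^r r^{-1}\eta'$. The radial integral is $\int_0^\infty\eta'\,dr=0$.

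\emph{Last term on the right.} Its integrand has degree $-(n-1)$ times $\eta'$, so it also vanishes.

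We are left with
\[
\int_{S^{n-1}}H_+^{q-1}|\nabla H_+|^2d\sigma\leq\int_{S^{n-1}}H_+^{q-1}|\BmcF||\nabla H_+|\,d\sigma ,
\]
where $\nabla$ is the full gradient restricted to the sphere. By Cauchy--Schwarz, the left side is at most $\int_{S^{n-1}}H_+^{q-1}|\BmcF|^2$.

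Next set $w=H_+^{(q+1)/2}=H_+^{\frac{n-2}{4}}$, which is $(-\frac{n-2}{2})$-homogeneous with $|\nabla w|^2=\frac{(q+1)^2}{4}H_+^{q-1}|\nabla H_+|^2$. Splitting the full gradient on the sphere into its tangential and radial parts gives $|\nabla w|^2=|\nabla^{S^{n-1}}w|^2+\frac{(n-2)^2}{4}w^2$. Hence $\int_{S^{n-1}}|\nabla w|^2$ controls $\|w\|_{H^1(S^{n-1})}^2$. The Sobolev embedding on the $(n-1)$-dimensional sphere then yields
\[
\|w\|_{L^{\frac{2(n-1)}{n-3}}(S^{n-1})}^2\leq C\|H_+\|_{L^\theta}^{\frac{n-2}{2}},
\]
since $\frac{n-2}{4}\cdot\frac{2(n-1)}{n-3}=\theta$. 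On the right-hand side, Hölder on the finite-measure sphere gives
\[
\int_{S^{n-1}}H_+^{\frac{n-6}{2}}|\BmcF|^2\leq C\|H_+\|_{L^\theta}^{\frac{n-6}{2}}\|\BmcF\|_{L^n}^2 ,
\]
because $\frac{n-6}{2\theta}+\frac{2}{n}<1$. (For $n=6$ this reads simply $\int_{\{H>0\}}|\BmcF|^2\leq C\|\BmcF\|_{L^6}^2$.) Combining the two bounds, $\|H_+\|_{L^\theta}^{\frac{n-2}{2}}\leq C\|H_+\|_{L^\theta}^{\frac{n-6}{2}}\|\BmcF\|_{L^n}^2$. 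Dividing, which is trivial if $H_+\equiv0$, gives \eqref{eq:Hbyflux}.

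The step I expect to need the most care is justifying the test function. This means checking that $\eta H_+^q\in W^{1,n/(n-1)}$ with compact support in $\mathbb R^n\setminus\{0\}$ can be approximated by smooth nonnegative test functions so that the $\BmcF$ term passes to the limit, and handling the case $n=6$, where $H_+^{q-1}$ is an indicator function. After that, the rest is bookkeeping of homogeneity degrees.
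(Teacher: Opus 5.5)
Your proposal is correct and follows essentially the same route as the paper. You test with $H_+^{(n-4)/2}$ times a radial cutoff, discard the cutoff-derivative terms by homogeneity, reduce to $S^{n-1}$, and close with Cauchy--Schwarz, the Sobolev inequality on $S^{n-1}$ and H\"older. Two differences are only cosmetic: the paper kills $\int H_+^{q}\nabla H_+\cdot\nabla\xi$ directly as a $(-(n-1))$-homogeneous integrand times $\xi'$ instead of integrating by parts onto $\Delta\eta$, and it splits $\nabla H_+$ into tangential and radial parts instead of keeping the full gradient.

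One small slip: the display you attribute to Sobolev should read $\|H_+\|_{L^\theta}^{\frac{n-2}{2}}=\|w\|^2_{L^{\frac{2(n-1)}{n-3}}(S^{n-1})}\leq C\|w\|^2_{H^1(S^{n-1})}$. This is the direction you actually use when you combine the bounds.
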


\begin{proof}
	Let
	\(
	\alpha=\frac{n-4}{2}\geq1.
	\)
	Choose a nonnegative radial function $\xi\in C_c^\infty(0,\infty)$ which is not identically zero. Since $\alpha\geq1$, the function $H_+^\alpha\xi$ is  Lipschitz and has compact support away from the origin.   On a fixed annulus, $\nabla H$ and $\Bu\cdot\nabla H$ are bounded and $\BmcF\in L^n\subset L^2$. Hence, $H_+^\alpha\xi$ is an admissible test function in \eqref{eq:BernoulliFluxIneq}. Testing \eqref{eq:BernoulliFluxIneq} by $H_+^\alpha\xi$,  using $\div\Bu=0$ and integration by parts, we have 
	\begin{equation}\label{eq:estiforH}
		\begin{aligned}
		\alpha\int_{\mathbb{R}^n \setminus\{0\}} H_+^{\alpha-1}|\nabla H_+|^2\xi\,dx
		\leq{}&\alpha\int_{\mathbb{R}^n \setminus\{0\}} \BmcF\cdot\nabla H_+\,H_+^{\alpha-1}\xi\,dx\\
		&+\int_{\mathbb{R}^n \setminus\{0\}} \BmcF\cdot\nabla\xi\,H_+^\alpha dx
		-\int_{\mathbb{R}^n \setminus\{0\}} H_+^\alpha\nabla H_+\cdot\nabla\xi\,dx\\
		&+\frac{1}{\alpha+1}\int_{\mathbb{R}^n \setminus\{0\}} H_+^{\alpha+1}\Bu\cdot\nabla\xi\,dx.
			\end{aligned}
	\end{equation}
	Now one observes that  the last three terms on the right-hand side above containing $\nabla\xi$ vanish. Indeed, $\BmcF H_+^\alpha$, $H_+^\alpha\nabla H_+$ and $H_+^{\alpha+1}\Bu$
are homogeneous of degree $-n+1$,
	using the spherical coordinates, each term is  a constant multiple of $\int_0^\infty\xi'(r)dr=0$. 
 Taking $ \BmcF\cdot\nabla\xi\,H_+^\alpha $ as an example, one has
	\begin{equation*}
		\begin{aligned}
	\int_{\mathbb{R}^n \setminus\{0\}} \BmcF\cdot\nabla\xi\,H_+^\alpha dx &= 	\int_{\mathbb{R}^n \setminus\{0\}} b\cdot\xi'(r)\,H_+^\alpha dx\\
	&=\int_{S^{n-1}} \int_{0}^{\infty}(b H_+^\alpha) (1, \sigma)\cdot\xi'(r)\,dr \,d\sigma =0.
		\end{aligned}
	\end{equation*}
     Here, we use $b$ to denote the radial component of $\BmcF$, and  we write $\BmcF$  as
	\begin{equation*}
		\BmcF=r^{-3}(\BG+b\Be_r),
	\end{equation*}
	where $\BG$ denotes tangential part of $\BmcF$.
	Consequently, \eqref{eq:estiforH} becomes
	\begin{equation}\label{eq:fullspacefluxenergy}
		\alpha\int_{\mathbb{R}^n \setminus\{0\}} H_+^{\alpha-1}|\nabla H_+|^2\xi\,dx
		\leq
		\alpha\int_{\mathbb{R}^n \setminus\{0\}} \BmcF\cdot\nabla H_+\,H_+^{\alpha-1}\xi\,dx.
	\end{equation}
	  After canceling the   factor $\int_0^\infty r^{-1}\xi(r)dr$ on both sides of \eqref{eq:fullspacefluxenergy} by using the homogeneities of $H$ and $\BmcF$, we can obtain
	\begin{align*}
		\alpha\int_{S^{n-1}}H_+^{\alpha-1}
		\left(|\nabla^{S^{n-1}}H_+|^2+4H_+^2\right)d\sigma
		\leq{}&\alpha\int_{S^{n-1}}
		\BG\cdot\nabla^{S^{n-1}}H_+\,H_+^{\alpha-1}d\sigma\nonumber\\
		&-2\alpha\int_{S^{n-1}}bH_+^\alpha d\sigma.
	\end{align*}
 A direct calculation using Young's inequality implies that
	\begin{equation*}
		\left\|H_+^{\frac{\alpha+1}{2}}\right\|_{H^1(S^{n-1})}^2
		\leq C(n)\int_{S^{n-1}}|\BmcF|^2H_+^{\alpha-1}d\sigma.
	\end{equation*}
 The Sobolev inequality on $S^{n-1}$ \cite{Aubin76}, together with
\(
		(\alpha+1)\frac{n-1}{n-3}=\theta = \frac{(n-2)(n-1)}{2(n-3)}\)
 yields
	\begin{equation}\label{eq:fluxSobolevH}
		\|H_+\|_{L^\theta(S^{n-1})}^{\alpha+1} = \mathcal H^{\alpha+1}
		\leq C(n)\int_{S^{n-1}}|\BmcF|^2H_+^{\alpha-1}d\sigma.
	\end{equation}
	Since
	\begin{equation*}
		\frac2n+\frac{\alpha-1}{\theta}<1,
	\end{equation*}
	an application of H\"older's inequality  gives
	\begin{equation*}
		\int_{S^{n-1}}|\BmcF|^2H_+^{\alpha-1}\,d\sigma
		\leq C(n)\|\BmcF\|_{L^n(S^{n-1})}^2\mathcal H^{\alpha-1}.
	\end{equation*}
This estimate holds for every $n\geq6$.
	Combining this with \eqref{eq:fluxSobolevH}, and dividing by $\mathcal H^{\alpha-1}$ when $\mathcal H>0$, proves \eqref{eq:Hbyflux}. The case $\mathcal H=0$ is obvious.
\end{proof}

\section{A priori estimates for the weighted energy and the Bernoulli source flux}\label{subsec:morrey}
In this section, we give two key a priori estimates. One is the Morrey-type control
of the velocity. Another is the estimate for the Bernoulli source flux.
We first adapt the weighted identity for the pressure and center-averaging argument from \cite{WangYang26} to control the energy on balls centered in a fixed annulus. 
We  mention that the  weighted identity of the pressure using the weight $|x-y|^{s+2}$
for $0<s<n-4$
already appeared in \cite{Frehse94, FrehseRuzicka94}. The key difference here is to choose the endpoint weight $|x-y|^{n-2}$ inspired by \cite{WangYang26}.

\begin{lemma}[Endpoint weighted estimate and Morrey type control of the velocity]\label{lem:endpointMorrey}
Assume $n\geq 6$. Let $\Bu$ be a self-similar solution to \eqref{NS} such that $\Bu\in C^2(\mathbb R^n\setminus\{0\})$. There exist constants $\frac{1}{12}>\rho_0>0$ and $C>0$, depending only on $n$, such that
\begin{equation}\label{eq:Morreyall}
    \sup_{\substack{\frac12<|z|<\frac32\\0<\rho\leq\rho_0}}
    \rho^{2-n}\int_{B_\rho(z)}|\Bu|^2dx
    \leq C\left(\mathcal E+\mathcal H+N_f\right).
\end{equation}
\end{lemma}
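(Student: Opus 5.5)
The plan is a Frehse--R\r{u}\v{z}i\v{c}ka type weighted pressure identity with a piecewise radial weight centred at $z$. Inside $B_\rho(z)$ the weight is quadratic, which produces exactly $\rho^{2-n}\int_{B_\rho(z)}|\Bu|^2$. Outside it has the endpoint profile $g'(t)=t^{3-n}$, which keeps every velocity coefficient nonnegative. Throughout, fix $\frac12<|z|<\frac32$ and $0<\rho\le\rho_0$, and write $t=|x-z|$, $\Be=(x-z)/t$. Define $\psi(x)=g(t)$ by
\begin{equation*}
g'(t)=t\rho^{2-n}\ \ (t<\rho),\qquad g'(t)=t^{3-n}\ \ (t\ge\rho).
\end{equation*}
This $g'$ is continuous at $t=\rho$, and $g$ is bounded for $n\ge 6$. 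Since $\partial_i\partial_j\psi=g''e_ie_j+\frac{g'}{t}(\delta_{ij}-e_ie_j)$, we get $\Delta\psi=n\rho^{2-n}$ for $t<\rho$ and $\Delta\psi=2t^{2-n}$ for $t>\rho$. In particular $0\le\Delta\psi\le n|x-z|^{2-n}$, and $|\nabla\psi|\le |x-z|^{3-n}$.

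\emph{Step 1: the identity.} Take the divergence of \eqref{NS} to get $-\Delta p=\partial_i\partial_j(u_iu_j)-\div\Bf$. Test it against $\psi\chi$, where $\chi$ is a cutoff equal to $1$ on $B_4$ and supported in $B_8$ (and cut off near the origin too, letting that cutoff tend to $0$). Then substitute $p=H-\frac12|\Bu|^2$. The main terms give
\begin{equation*}
\int\Bigl[\Bigl(\tfrac{g''}{2}+\tfrac{(n-3)g'}{2t}\Bigr)|\Bu|^2+\Bigl(\tfrac{g'}{t}-g''\Bigr)(\Bu\cdot\Be)^2\Bigr]\chi\,dx
=\int H\,\Delta\psi\,\chi\,dx+\int \Bf\cdot\nabla\psi\,\chi\,dx+\text{(cutoff terms)}.
\end{equation*}
The sign checks are as follows. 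For $t<\rho$ the $|\Bu|^2$ coefficient is $\frac{n-2}{2}\rho^{2-n}$ and the $(\Bu\cdot\Be)^2$ coefficient is $0$. For $t>\rho$ the $|\Bu|^2$ coefficient vanishes exactly, which is the endpoint choice, and the $(\Bu\cdot\Be)^2$ coefficient is $(n-2)t^{2-n}\ge0$. So the left side is at least $\frac{n-2}{2}\rho^{2-n}\int_{B_\rho(z)}|\Bu|^2$.

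\emph{Step 2: the right-hand side.} Use $H\le H_+$ and $\Delta\psi\ge0$, and note that $|x-z|^{2-n}\in L^{\theta'}_{loc}$ exactly because $\theta>\frac n2$. Then
\begin{equation*}
\int H_+\Delta\psi\,\chi\le C\|H_+\|_{L^\theta(B_8)}\le C\mathcal H,
\end{equation*}
where the last step uses $(-2)$-homogeneity and $2\theta<n$, which makes $\int_{B_8}H_+^\theta$ a finite multiple of $\mathcal H^\theta$. The force term is at most $CN_f\int_{B_8}|x|^{-3}|x-z|^{3-n}dx\le CN_f$. The cutoff terms live on $B_8\setminus B_4$, where $|\nabla^k\psi|\le C$. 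They are bounded by $C\int_{B_8\setminus B_4}(|\Bu|^2+|p|+|\Bf|)$. Lemma~\ref{Lemmapressure} with $\beta$ close to $1$, together with the Sobolev inequality on $S^{n-1}$ applied to $\mathcal E$, bounds these by $C(\mathcal E+N_f)$. Near the origin, $|\Bu|^2\sim|x|^{-2}$ and $|p|\sim|x|^{-2}$ are integrable, so the inner-cutoff errors vanish in the limit. All constants are uniform in $z$ and $\rho$, which gives \eqref{eq:Morreyall}.

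\emph{Main obstacle.} The delicate point is the choice of the outer profile. The harmonic exterior profile $g'\propto t^{1-n}$ of the classical weights leaves a negative $|\Bu|^2$ coefficient $\sim -t^{-n}$ outside $B_\rho(z)$, and that term is not controlled uniformly in $\rho$. The choice $g'=t^{3-n}$ is the unique power for which this coefficient vanishes. Its price is $\Delta\psi\sim|x-z|^{2-n}$, which is exactly borderline for pairing with $H_+$; this is where $\theta>\frac n2$ from Lemma~\ref{lem:fluxBernoulli} is essential. A secondary technical issue is justifying the test near $x=0$ and $x=z$, since $\psi$ is only $C^{1,1}$. I would handle this by mollifying $g$ and passing to the limit, using $\Bu\in C^2$ near $z$. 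If the pointwise-in-$z$ argument has a gap at this step, I would fall back on averaging in $z$ as in \cite{WangYang26}: integrate the identity over centres in a small ball, which smooths the kernel.
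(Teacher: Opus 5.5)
Your route differs from the paper's, and after one correction (second paragraph) it works.

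\textbf{Comparison with the paper.} The paper tests the pressure equation with the pure endpoint kernel $\zeta^{(y)}(x)|x-y|^{4-n}$, localized near $y$ and regularized by $K^{(y)}_\varepsilon$. The resulting identity does not involve $\rho$, but it only controls the radial quantity $\int|\Bu\cdot(x-y)|^2\zeta^{(y)}|x-y|^{-n}\,dx$. The full $\rho^{2-n}\int_{B_\rho(z)}|\Bu|^2$ is recovered only after averaging over centres $y\in B_{2\rho}(z)$ and using $\int_{B_\rho(0)}|a\cdot y|^2|y|^{-n}dy=c_n\rho^2|a|^2$.

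Your $\rho$-dependent $C^{1,1}$ weight, $\nabla\psi=(x-z)\max\{|x-z|,\rho\}^{2-n}$, replaces that averaging step. The quadratic core gives $\tfrac12|\Bu|^2\Delta\psi-u_iu_j\partial_i\partial_j\psi=\tfrac{n-2}{2}\rho^{2-n}|\Bu|^2$ in $B_\rho(z)$. The endpoint exterior keeps the $|\Bu|^2$ coefficient at $0$ and the radial coefficient at $(n-2)|x-z|^{2-n}\geq 0$. Your coefficient computations, and the bounds $0\le\Delta\psi\le n|x-z|^{2-n}$ and $|\nabla\psi|\le|x-z|^{3-n}$, are correct. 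So you get the Morrey bound pointwise in $z$, with no singular kernel, no $\varepsilon$-regularization and no Tonelli step.

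What the paper's version buys is a scale-free weighted estimate valid for every centre. Also, since $\zeta^{(y)}$ is supported in $\{\frac14<|x|<\frac74\}$, it never touches the origin. Your global cutoff forces you to handle $x=0$. That is harmless for $n\geq 6$, since the inner-cutoff errors are $O(\epsilon^{n-4})$, but you could equally localize $\chi$ in a fixed ball around $z$.

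You should also normalize $g(\infty)=0$. Otherwise $|\psi|\sim\rho^{4-n}$ on $\supp\nabla\chi$, and you would have to note that an additive constant in $\psi$ drops out by the equation.

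\textbf{The step that is wrong as written.} The bound for the $H_+$ term fails. You claim $2\theta<n$, so that $\int_{B_8}H_+^\theta\,dx$ is a finite multiple of $\mathcal H^\theta$. In fact $2\theta=n+\frac{2}{n-3}>n$, which you use one line earlier to get $|x-z|^{2-n}\in L^{\theta'}_{loc}$. Hence $\int_{B_8}H_+^\theta\,dx=\mathcal H^\theta\int_0^8 r^{n-1-2\theta}\,dr=\infty$ unless $H_+\equiv 0$, and the inequality $\int H_+\Delta\psi\,\chi\le C\|H_+\|_{L^\theta(B_8)}$ says nothing.

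The fix is to split at the origin. On $B_{1/4}(0)$ one has $|x-z|>\frac14$, so $\Delta\psi\le C$, and $\int_{B_{1/4}}H_+\,dx=\frac{4^{2-n}}{n-2}\|H_+\|_{L^1(S^{n-1})}\le C\mathcal H$. On $B_8\setminus B_{1/4}$, homogeneity gives $\|H_+\|_{L^\theta}\le C\mathcal H$, and H\"older with $|x-z|^{2-n}\in L^{\theta'}(B_8)$ applies uniformly in $z$. This is why the paper localizes to a fixed annulus in \eqref{eq:Hpotentialall}. With this repair your argument proves \eqref{eq:Morreyall}.
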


\begin{proof}
By Lemma \ref{Lemmapressure} and the Sobolev inequality on $S^{n-1}$,
\begin{equation}\label{eq:pauxendpoint}
    \|p\|_{L^{\frac{n-1}{n-3}}(S^{n-1})}
    \leq C\|\Bu\|_{L^{\frac{2(n-1)}{n-3}}(S^{n-1})}^2+CN_f
    \leq C(\mathcal E+N_f).
\end{equation}
Fix $0<R_0<\frac{1}{24}$ and a function $\zeta\in C_c^\infty(B_{2R_0}(0))$ such that $0\leq\zeta\leq1$ and $\zeta=1$ on $B_{R_0}(0)$. For $\frac13<|y|<\frac53$, set $\zeta^{(y)}(x)=\zeta(x-y)$. Thus
\begin{equation*}
    0\leq\zeta^{(y)}(x)\leq1,\qquad
    \zeta^{(y)}(x)=1\ \text{in }B_{R_0}(y),\quad
    |\nabla\zeta^{(y)}(x)|+|\nabla^2\zeta^{(y)}(x)|\leq C.
\end{equation*}
Taking divergence of \eqref{NS} yields
\begin{equation}\label{eq:pressureendpoint}
    -\Delta p=\partial_i\partial_j(u_i u_j)-\div\Bf.
\end{equation}
We test \eqref{eq:pressureendpoint} by the  kernel $\zeta^{(y)}(x)|x-y|^{4-n}$. To justify this rigorously, for $\varepsilon>0$ let
\begin{equation*}
    K_\varepsilon^{(y)}(x):=(|x-y|^2+\varepsilon^2)^{\frac{4-n}{2}}
\end{equation*}
and use $\zeta^{(y)}K_\varepsilon^{(y)}$ as a test function.  We have, uniformly in $\varepsilon$, that
\begin{equation*}
    |K_\varepsilon^{(y)}|\leq C|x-y|^{4-n},\qquad
    |\nabla K_\varepsilon^{(y)}|\leq C|x-y|^{3-n},\qquad
    |\nabla^2K_\varepsilon^{(y)}|\leq C|x-y|^{2-n}.
\end{equation*}
 After letting $\varepsilon \to 0^+$ and using
\begin{equation*}
    \partial_i\partial_j|x-y|^{4-n}
    =(4-n)\delta_{ij}|x-y|^{2-n}
    +(4-n)(2-n)\frac{(x_i-y_i)(x_j-y_j)}{|x-y|^n},
\end{equation*}
one obtains
\begin{align}\label{eq:endpointweightedidentity}
    &(n-4)(n-2)\int_{\mathbb R^n}
    \frac{|\Bu\cdot(x-y)|^2\zeta^{(y)}}{|x-y|^n}\,dx
    -2(n-4)\int_{\mathbb R^n}
    \frac{H\zeta^{(y)}}{|x-y|^{n-2}}\,dx
    =\mathcal R(y),
\end{align}
where $K^{(y)}(x)=|x-y|^{4-n}$, and
\begin{align*}
 \mathcal R(y)={}&-\int_{\mathbb R^n}
 \left(2u_i u_j\partial_i\zeta^{(y)}\partial_jK^{(y)}
       +u_i u_jK^{(y)}\partial_i\partial_j\zeta^{(y)}\right)dx\\
 &-\int_{\mathbb R^n}p
 \left(2\nabla\zeta^{(y)}\cdot\nabla K^{(y)}+K^{(y)}\Delta\zeta^{(y)}\right)dx
 -\int_{\mathbb R^n}\Bf\cdot\nabla(\zeta^{(y)}K^{(y)})\,dx.
\end{align*}
The derivatives of $\zeta^{(y)}$ are supported in a domain with a fixed positive distance from $y$, and all supports lie in $\{\frac{1}{4}<|x|<\frac{7}{4}\}$. The force term is bounded by $CN_f$, since $|\nabla K^{(y)}|\leq C|x-y|^{3-n}$ is locally integrable. Using \eqref{eq:pauxendpoint} and homogeneity, we obtain
\begin{equation}\label{eq:Rendpoint}
    |\mathcal R(y)|\leq C(\mathcal E+N_f).
\end{equation}

Since $H\leq H_+$, \eqref{eq:endpointweightedidentity} and \eqref{eq:Rendpoint} imply
\begin{equation}\label{eq:directionalall}
    \int_{\mathbb R^n}
    \frac{|\Bu\cdot(x-y)|^2\zeta^{(y)}}{|x-y|^n}\,dx
    \leq C\int_{\mathbb R^n}\frac{H_+\zeta^{(y)}}{|x-y|^{n-2}}\,dx
    +C(\mathcal E+N_f).
\end{equation}
Recall \eqref{eq:theta} that $\theta>\frac{n}{2}$,  therefore $(n-2)\theta'<n$, where $\theta'$ is H\"older conjugate of $\theta$ such that $\frac{1}{\theta}+\frac{1}{\theta'}=1$. H\"older's inequality and the $(-2)$-homogeneity of $H$ give
\begin{equation}\label{eq:Hpotentialall}
    \sup_{\frac13<|y|<\frac53}
    \int_{\mathbb R^n}\frac{H_+\zeta^{(y)}}{|x-y|^{n-2}}\,dx
    \leq C\mathcal H.
\end{equation}
Consequently,
\begin{equation}\label{eq:directionalall2}
    \int_{\mathbb R^n}
    \frac{|\Bu\cdot(x-y)|^2\zeta^{(y)}}{|x-y|^n}\,dx
    \leq C(\mathcal E+\mathcal H+N_f)
\end{equation}
uniformly for $\frac13<|y|<\frac53$.

We now average the above with respect to $y$. Choose $0<\rho_0<\min\{R_0,\frac{1}{12}\}$. For $\frac12<|z|<\frac32$ and $0<\rho\leq\rho_0$, integrate \eqref{eq:directionalall2} over $y\in B_{2\rho}(z)$.  If $x\in B_\rho(z)$ and $y\in B_\rho(x)$, then $y\in B_{2\rho}(z)$ and $\zeta^{(y)}(x)=1$. The integrand is measurable in $(x,y)$, so Tonelli's theorem gives
\begin{align*}
    C(\mathcal E+\mathcal H+N_f)\rho^n
    &\geq \int_{B_{2\rho}(z)}\int_{\mathbb R^n}
    \frac{|\Bu(x)\cdot(x-y)|^2\zeta^{(y)}(x)}{|x-y|^n}\,dx\,dy\\
    &\geq \int_{B_\rho(z)}\int_{B_\rho(x)}
    \frac{|\Bu(x)\cdot(x-y)|^2}{|x-y|^n}\,dy\,dx.
\end{align*}
For every $a\in\mathbb R^n$, it holds that
\begin{equation*}
    \int_{B_\rho(0)}\frac{|a\cdot y|^2}{|y|^n}\,dy=c_n\rho^2|a|^2
\end{equation*}
for some $c_n>0$. Therefore, we have
\begin{equation*}
    \rho^{2-n}\int_{B_\rho(z)}|\Bu|^2dx
    \leq C(\mathcal E+\mathcal H+N_f).
\end{equation*}
This proves \eqref{eq:Morreyall} and thus the lemma.
\end{proof}

We next estimate the divergence-form source $\BmcF$ defined in \eqref{eq:defF}. Before that,  we recall the following potential estimate needed in the proof.
\begin{lemma}[{\cite[Lemma 2.4]{WangYang26}}]\label{lem:morreyBMO}
Let $n\geq3$, $n_*:=\frac{2n}{n+2}$, and let $g\in L^{n_*}(\mathbb R^n)$ satisfy
\begin{equation}\label{definitionm_1}
    m_1(g):=\sup_{z\in\mathbb R^n,\rho>0}
    \rho^{1-n}\int_{B_\rho(z)}|g(x)|\,dx<\infty.
\end{equation}
Then one has
\begin{equation}\label{eq:morreyBMOgeneral}
    \|\nabla(-\Delta)^{-1}g\|_{L^2(\mathbb R^n)}\leq C(n)\|g\|_{L^{n_*}(\mathbb R^n)}
    \quad \text{and}\quad
    \|\nabla(-\Delta)^{-1}g\|_{\operatorname{BMO}(\mathbb R^n)}\leq C(n)m_1(g).
\end{equation}
Here, for a locally integrable function \(f \in L^1_{\text{loc}}(\mathbb{R}^n)\), the BMO seminorm is defined as
\[\|f\|_{\operatorname{BMO}(\mathbb R^n)}=\sup _{B}\frac{1}{|B|}\int _{B}|{}f(x)-f_{B}|{}\,dx\]
where the supremum is taken over all balls \(B \subset \mathbb{R}^n\), and \(f_B = \frac{1}{|B|} \int_B f(y) \, dy\).
\end{lemma}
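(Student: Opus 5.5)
The plan is to work with the explicit kernel representation
\[
\nabla(-\Delta)^{-1}g(x)=c_n\int_{\mathbb R^n}\frac{x-y}{|x-y|^{n}}\,g(y)\,dy=:(\mathbf K*g)(x),\qquad |\mathbf K(w)|\leq C|w|^{1-n},
\]
so that $|\nabla(-\Delta)^{-1}g|\leq C\,I_1(|g|)$, where $I_1$ is the Riesz potential of order one. The two bounds in \eqref{eq:morreyBMOgeneral} then come from a global Hardy--Littlewood--Sobolev estimate and a local/far splitting on each ball.

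\textbf{$L^2$ bound.} By the Hardy--Littlewood--Sobolev inequality, $I_1$ maps $L^{q}(\mathbb R^n)$ to $L^{r}(\mathbb R^n)$ whenever $\frac1r=\frac1q-\frac1n$ and $1<q<n$. With $q=n_*=\frac{2n}{n+2}$ we get $\frac1r=\frac{n+2}{2n}-\frac1n=\frac12$, which gives the first inequality directly. In particular $\mathbf K*g$ is an a.e.\ defined $L^2$ function, so its BMO seminorm makes sense.

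\textbf{BMO bound.} Fix a ball $B=B_\rho(z)$ and split $g=g_1+g_2$ with $g_1=g\chi_{B_{2\rho}(z)}$.

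\emph{Local part.} By Tonelli,
\[
\frac1{|B|}\int_B|\mathbf K*g_1|\,dx\leq\frac{C}{\rho^n}\int_{B_{2\rho}(z)}|g(y)|\int_{B}|x-y|^{1-n}\,dx\,dy .
\]
The inner integral is at most $\int_{B_{3\rho}(y)}|x-y|^{1-n}dx\leq C\rho$. Hence the average is bounded by $C\rho^{1-n}\int_{B_{2\rho}(z)}|g|\leq C m_1(g)$.

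\emph{Far part.} We compare $\mathbf K*g_2(x)$ with the constant $c_B:=\mathbf K*g_2(z)$. This constant is finite: Hölder's inequality pairs $g\in L^{n_*}$ with $|z-y|^{1-n}\chi_{\{|y-z|\geq2\rho\}}\in L^{\frac{2n}{n-2}}$, which holds because $(n-1)\frac{2n}{n-2}>n$. For $x\in B$ and $|y-z|\geq2\rho$, the mean value theorem gives $|\mathbf K(x-y)-\mathbf K(z-y)|\leq C\rho|z-y|^{-n}$. Decomposing into dyadic annuli $A_k=\{2^k\rho\leq|y-z|<2^{k+1}\rho\}$, $k\geq1$, we obtain
\[
|\mathbf K*g_2(x)-c_B|\leq C\sum_{k\geq1}\rho(2^k\rho)^{-n}\int_{B_{2^{k+1}\rho}(z)}|g|\leq C\sum_{k\ge1}\rho(2^k\rho)^{-n}(2^{k+1}\rho)^{n-1}m_1(g)\leq Cm_1(g).
\]

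\emph{Conclusion.} Combining the two parts, $\frac1{|B|}\int_B|\mathbf K*g-(\mathbf K*g)_B|\leq\frac{2}{|B|}\int_B|\mathbf K*g-c_B|\leq Cm_1(g)$. Taking the supremum over $B$ gives the second inequality.

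The only delicate point is justification: the representation formula for $\nabla(-\Delta)^{-1}g$ with $g\in L^{n_*}$, and the finiteness of $c_B$. Both are handled by the integrability of the tail kernel computed above. If needed, one can first prove the estimates for $g\in C_c^\infty$ and pass to the limit via the $L^2$ bound and Fatou's lemma. The constant depends only on $n$.
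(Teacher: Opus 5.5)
Your proof is correct. There is no in-paper argument to compare it with, because the paper does not prove this lemma; it quotes it from \cite[Lemma 2.4]{WangYang26}.

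What you give is the standard self-contained proof:
\begin{itemize}
\item Hardy--Littlewood--Sobolev with $q=n_*$, $r=2$ gives the $L^2$ bound.
\item For the BMO bound you split near and far on each ball. The near part uses only the size bound $|\mathbf K(w)|\le C|w|^{1-n}$, which yields $C\rho^{1-n}\int_{B_{2\rho}(z)}|g|\le C\,m_1(g)$.
\item The far part uses the gradient bound $|\nabla\mathbf K(w)|\le C|w|^{-n}$. Summed dyadically against $m_1(g)$, this gives the convergent series $\sum_k 2^{-k}$.
\end{itemize}
This is the classical argument that first-order Riesz-type potentials map the Morrey space defined by $m_1$ into BMO.

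One point your write-up handles correctly is worth making explicit. The hypothesis $g\in L^{n_*}$ is needed in the BMO part as well, not only for the $L^2$ bound. Finiteness of $m_1(g)$ alone does not make the reference constant $c_B=\mathbf K*g_2(z)$ finite. The dyadic pieces $\int_{A_k}|z-y|^{1-n}|g(y)|\,dy$ are each bounded only by $2^{n-1}m_1(g)$, and their sum can diverge; $g(y)=|y|^{-1}$ is an example. You correctly obtain finiteness of $c_B$ from H\"older's inequality, using $|w|^{1-n}\chi_{\{|w|\ge 2\rho\}}\in L^{2n/(n-2)}$. Only the difference $\mathbf K*g_2(x)-c_B$ is controlled by $m_1(g)$, so the final BMO constant depends on $m_1(g)$ alone.

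Your closing approximation remark is unnecessary, since the direct argument already works for every $g\in L^{n_*}$. It is harmless, because truncation and mollification do not increase $m_1$.
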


\begin{lemma}[Estimate for the Bernoulli source flux]\label{lem:sourcefluxSS}
Assume $n\geq6$ and let $\Bu$ be as in Lemma \ref{lem:endpointMorrey}. Define
\begin{equation}\label{eq:defgPsiF}
    g:=|\Bf||\Bu|,\quad
    \Psi:=(-\Delta)^{-1}g,\quad \text{and}\quad
    \BmcF:=\nabla\Psi+\Bf.
\end{equation}
Then $\Psi$ is $(-2)$-homogeneous, $\BmcF$ is $(-3)$-homogeneous, and that
\begin{equation}\label{eq:fluxdivergence}
    -\div\BmcF=g-\div\Bf.
\end{equation}
Moreover, for every fixed $2<q<\infty$, it holds that
\begin{equation}\label{eq:fluxLq}
    \|\BmcF\|_{L^q(S^{n-1})}
    \leq C(n,q)N_f\left(1+N_f^{\frac{1}{2}}+\mathcal E^{\frac{1}{2}}+\mathcal H^{\frac{1}{2}}\right).
\end{equation}
\end{lemma}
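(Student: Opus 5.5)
The plan is to handle $\Psi$ through the Newtonian potential, split the source into a near part on a fixed annulus and a far part, and estimate these two parts by different means. The near part is where Lemmas \ref{lem:endpointMorrey} and \ref{lem:morreyBMO} enter. The far part is estimated pointwise.

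\textbf{Definition, homogeneity and \eqref{eq:fluxdivergence}.} Since $\Bf$ is $(-3)$-homogeneous and $\Bu$ is $(-1)$-homogeneous, $g=|\Bf||\Bu|$ is $(-4)$-homogeneous, continuous away from $0$, and satisfies $g\leq N_f|x|^{-3}|\Bu|$. I define $\Psi:=c_n|x|^{2-n}*g$. The integral converges absolutely because $g\sim|y|^{-4}$ and $4<n$, which gives integrability near the origin, and because $|y|^{2-n-4}$ is integrable at infinity. The same bounds hold with $|x-y|^{1-n}$, so $\nabla\Psi=c_n'\,\frac{x-y}{|x-y|^n}*g$ also converges absolutely. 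Changing variables $y=\lambda y'$ gives $\Psi(\lambda x)=\lambda^{-2}\Psi(x)$. Hence $\BmcF=\nabla\Psi+\Bf$ is $(-3)$-homogeneous. Since $-\Delta\Psi=g$ in distributions, we get $-\div\BmcF=g-\div\Bf$, which is \eqref{eq:fluxdivergence}.

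\textbf{Splitting.} By homogeneity, $\|\BmcF\|_{L^q(S^{n-1})}^q$ is a constant multiple of $\int_{A}|\BmcF|^qdx$, where $A=\{\frac34<|x|<\frac43\}$. The term $\Bf$ contributes at most $CN_f$, so it suffices to bound $\nabla\Psi$ in $L^q(A)$. I write
\[
g=g_1+g_2,\qquad g_1=g\,\chi_{\{\frac23<|x|<\frac32\}},
\]
and set $\Psi_i$ to be the Newtonian potential of $g_i$.

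\textbf{The far part $g_2$.} For $x\in A$, the support of $g_2$ is at distance at least $c>0$ from $x$. Hence
\[
|\nabla\Psi_2(x)|\leq C\int \min\{1,|y|^{1-n}\}\,N_f|y|^{-3}|\Bu(y)|\,dy .
\]
In polar coordinates this is $CN_f\|\Bu\|_{L^1(S^{n-1})}$ times $\int_0^{1}r^{n-5}dr+\int_1^\infty r^{-4}dr<\infty$. This uses $n\geq 5$. Therefore $\|\nabla\Psi_2\|_{L^\infty(A)}\leq CN_f\,\mathcal E^{1/2}$.

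\textbf{The near part $g_1$.} I apply Lemma \ref{lem:morreyBMO} to $g_1$.

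First, $\|g_1\|_{L^{n_*}}\leq CN_f\|\Bu\|_{L^2(\mathrm{supp}\,g_1)}\leq CN_f\mathcal E^{1/2}$, by homogeneity and since $n_*<2$.

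Second, I bound $m_1(g_1)$ over balls $B_\rho(z)$, considering three cases.
\begin{itemize}
\item If $\rho\geq\rho_0/2$, then $\rho^{1-n}\int|g_1|\leq C\|g_1\|_{L^1}\leq CN_f\mathcal E^{1/2}$.
\item If $\rho<\rho_0/2$ and $B_\rho(z)$ meets $\mathrm{supp}\,g_1$, then $B_\rho(z)\subset B_{2\rho}(z')$ for some $z'$ with $\frac12<|z'|<\frac32$. By Cauchy--Schwarz,
\[
\rho^{1-n}\int_{B_{2\rho}(z')}N_f|\Bu|
\leq CN_f\Bigl(\rho^{2-n}\int_{B_{2\rho}(z')}|\Bu|^2\Bigr)^{1/2}.
\]
Lemma \ref{lem:endpointMorrey} applies to the ball $B_{2\rho}(z')$, since $2\rho\leq\rho_0$, and bounds the right-hand side by $CN_f(\mathcal E+\mathcal H+N_f)^{1/2}$.
\item If $\rho<\rho_0/2$ and $B_\rho(z)$ misses $\mathrm{supp}\,g_1$, the quantity is zero.
\end{itemize}
Thus $m_1(g_1)\leq CN_f(\mathcal E+\mathcal H+N_f)^{1/2}$.

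Set $w=\nabla\Psi_1$ and let $B=B_2(0)\supset A$. By John--Nirenberg, $\|w-w_B\|_{L^q(B)}\leq C(n,q)\|w\|_{\operatorname{BMO}}$. Moreover, $|w_B|\leq C\|w\|_{L^2}$. Combining these, $\|w\|_{L^q(A)}\leq C N_f(\mathcal E^{1/2}+\mathcal H^{1/2}+N_f^{1/2})$. Adding the bounds for $\Psi_1$, $\Psi_2$ and $\Bf$ gives \eqref{eq:fluxLq}.

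The main obstacle is the Morrey bound for $g_1$ at small scales. Lemma \ref{lem:endpointMorrey} controls only balls centered in $\frac12<|z|<\frac32$ with radius at most $\rho_0$. The cutoff region for $g_1$ and the re-centering of balls to radius $2\rho$ must therefore be chosen to stay inside that range, which is why the split uses $\frac23<|x|<\frac32$ and the threshold $\rho_0/2$. The rest is routine.
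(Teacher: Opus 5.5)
Your proof is correct. The near-field part (you apply Lemma \ref{lem:morreyBMO} to $g_1$, take the Morrey input from Lemma \ref{lem:endpointMorrey}, then use John--Nirenberg) matches the paper up to cosmetic choices. You use a sharp rather than smooth cutoff. You also re-center small balls at radius $2\rho$, whereas the paper observes that balls of radius $\rho\le\rho_0$ meeting $\supp\chi$ already have admissible centers.

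The genuine difference is the far-field piece. The paper never estimates $\nabla\Psi_2$ directly. Instead it derives the spherical equation $(-\Delta_{S^{n-1}}+2(n-4))\psi=g|_{S^{n-1}}$ for $\psi=\Psi|_{S^{n-1}}$ and tests it with $\psi$, which gives $\|\nabla\Psi\|_{L^2(A)}\le CN_f\mathcal E^{1/2}$ on annuli. It then notes that $\eta=\Psi-\Phi$ is harmonic on $A_1$, so interior estimates give $\|\nabla\eta\|_{L^\infty(A_0)}\le C\|\nabla\Psi-\nabla\Phi\|_{L^2(A_1)}$.

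You instead bound $\nabla\Psi_2$ pointwise from the kernel, using the positive distance between $A$ and $\supp g_2$ together with polar coordinates. Your route is more elementary: it needs only $\|\Bu\|_{L^1(S^{n-1})}$ and avoids both the spherical PDE and harmonic interior estimates. The paper's route exploits the coercive structure of the spherical operator instead of explicit kernel decay. In the paper, $n>4$ enters as $2(n-4)>0$; in yours it enters as the integrability of $r^{n-5}$ at the origin.
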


\begin{proof} {\it Step 1.}
Since $g$ is $(-4)$-homogeneous, the Newtonian potential
\begin{equation}\label{eq:Psidefinition}
    \Psi(x)=\frac{1}{n(n-2)\omega_n}\int_{\mathbb R^n}\frac{g(y)}{|x-y|^{n-2}}dy
\end{equation}
is absolutely convergent for $x\neq0$. Indeed, near the origin the  integral is bounded by $C\int_0^1r^{n-5}dr$, while at infinity it is bounded by $C\int_1^\infty r^{-3}dr$. The same argument applies to $\nabla\Psi$, whose kernel is also locally integrable near $x$. 

A change of variables in \eqref{eq:Psidefinition} gives
\begin{equation*}
    \Psi(\lambda x)=\lambda^{-2}\Psi(x)\quad
  \text{and}\quad  \nabla\Psi(\lambda x)=\lambda^{-3}\nabla\Psi(x).
\end{equation*}
Since $g$ is locally Lipschitz away from the origin, local  estimates for Poisson equation give $\Psi\in C^{2,\delta}_{\mathrm{loc}}(\mathbb R^n\setminus\{0\})$ for every $0<\delta<1$. Moreover, $-\Delta\Psi=g$, and hence \eqref{eq:fluxdivergence} follows.

{\it Step 2.} Let $\psi=\Psi|_{S^{n-1}}$. Then using the Laplace–Beltrami operator on $S^{n-1}$  and that $\Psi$ is $(-2)$-homogeneous, we have
\begin{equation}\label{eq:spherePsi}
    \left(-\Delta_{S^{n-1}}+2(n-4)\right)\psi=g|_{S^{n-1}}.
\end{equation}
Testing \eqref{eq:spherePsi} by $\psi$ and using
\begin{equation*}
    \|g\|_{L^2(S^{n-1})}\leq N_f\|\Bu\|_{L^2(S^{n-1})}\leq CN_f\mathcal E^{\frac{1}{2}}
\end{equation*}
yield
\begin{equation}\label{eq:PsiH1}
    \|\psi\|_{H^1(S^{n-1})}\leq CN_f\mathcal E^{\frac{1}{2}}.
\end{equation}
Consequently, on every fixed annulus $A\Subset\mathbb R^n\setminus\{0\}$,
\begin{equation}\label{eq:PsiL2annulus}
    \|\nabla\Psi\|_{L^2(A)}\leq C(n,A)N_f\mathcal E^{\frac{1}{2}}.
\end{equation}

Choose fixed annuli
\begin{equation*}
    A_0:=\left\{\frac78<|x|<\frac98\right\}
    \Subset
    A_1:=\left\{\frac34<|x|<\frac54\right\}
    \Subset
    A_2:=\left\{\frac23<|x|<\frac43\right\},
\end{equation*}
and let $\chi\in C_c^\infty(A_2)$ satisfy $0\leq\chi\leq1$ and $\chi=1$ on $A_1$. Set
\begin{equation*}
    \Phi:=(-\Delta)^{-1}(\chi g)\quad\text{and}\quad  \Bw:=\nabla\Phi.
\end{equation*}
For $\rho_0$ appeared in Lemma \ref{lem:endpointMorrey}, if $0<\rho\leq\rho_0$ and $B_\rho(z)$ intersects $\supp\chi$, then $\frac{1}{2}<|z|<\frac{3}{2}$ due to $\rho_0<\frac{1}{12}$.
 It follows from Lemma \ref{lem:endpointMorrey} and H\"older's inequality that one has
\begin{align*}
    \int_{B_\rho(z)}\chi g\,dx
    &\leq CN_f|B_\rho(z)|^{\frac{1}{2}}
\left(\int_{B_\rho(z)}|\Bu|^2dx\right)^{\frac{1}{2}}\leq CN_f(\mathcal E+\mathcal H+N_f)^{\frac{1}{2}}\rho^{n-1}.
\end{align*}
 If $\rho \geq \rho_0$, then applying  the H\"older's inequality gives 
\begin{align*}
	\int_{B_\rho(z)} \chi g \, dx \leq C N_f |A_2|^{\frac12} \left(\int_{A_2}|\Bu|^2dx\right)^{\frac{1}{2}} \leq C N_f \mathcal E^\frac12. 
\end{align*}
 Therefore, it holds that
\begin{equation}\label{eq:m1gSS}
    m_1(\chi g)\leq CN_f(\mathcal E+\mathcal H+N_f)^{\frac{1}{2}}.
\end{equation}
Here $m_1$ is defined by \eqref{definitionm_1}.
Furthermore, since $n_*=2n/(n+2)<2$ and $\chi g$ has support in a fixed annulus, one has
\begin{equation}\label{eq:gqstarSS}
    \|\chi g\|_{L^{n_*}(\mathbb R^n)}\leq \|\chi g\|_{L^{2}(\mathbb R^n)} \leq  CN_f\mathcal E^{\frac{1}{2}}.
\end{equation}

{\it Step 3.}
Applying Lemma \ref{lem:morreyBMO} yields
\begin{equation}\label{eq:wBMO}
    \|\Bw\|_{L^2(\mathbb R^n)}\leq CN_f\mathcal E^{\frac{1}{2}}
    \quad
    \textrm{and}  \quad \|\Bw\|_{\operatorname{BMO}(\mathbb R^n)}
    \leq CN_f(\mathcal E+\mathcal H+N_f)^{\frac{1}{2}}.
\end{equation}
Since $\chi=1$ on $A_1$, the function $\eta:=\Psi-\Phi$ is harmonic in $A_1$. By \eqref{eq:PsiL2annulus}, \eqref{eq:wBMO}, and the interior estimates for harmonic functions, one has
\begin{equation}\label{eq:etaharmonic}
    \|\nabla\eta\|_{L^\infty(A_0)} \leq C \|\nabla \Psi -\nabla \Phi\|_{L^2(A_1)} \leq CN_f\mathcal E^{\frac{1}{2}}.
\end{equation}
Let $B$ be a fixed ball containing $A_0$. By the John-Nirenberg inequality \cite[Chapter IV]{Stein93} and the H\"older's inequality, we have
\begin{equation*}
    \|\Bw\|_{L^q(A_0)}
    \leq C(n,q)\left(\|\Bw\|_{\operatorname{BMO}(\mathbb R^n)}+\|\Bw\|_{L^2(\mathbb R^n)}\right) \leq C (n, q) N_f \left( \mathcal E^{\frac12} + \mathcal H^{\frac12} +  N_f^{\frac12}\right).
\end{equation*}
Indeed, this can be obtained by applying the John-Nirenberg inequality  to $\Bw-\Bw_B$ and using $|\Bw_B|\leq C\|\Bw\|_{L^2(A_0)}$, where $\Bw_B$ is the average of $\Bw$ on $B$. Combining this estimate with \eqref{eq:wBMO}, \eqref{eq:etaharmonic}, and $\BmcF=\Bw+\nabla\eta+\Bf$ on $A_0$, we obtain
\begin{equation}\label{eq:fluxannulus}
    \|\BmcF\|_{L^q(A_0)}
    \leq C(n,q)N_f\left(1+N_f^{\frac{1}{2}}+\mathcal E^{\frac{1}{2}}+\mathcal H^{\frac{1}{2}}\right).
\end{equation}
Finally, since $\BmcF$ is $(-3)$-homogeneous,
\begin{equation*}
    \|\BmcF\|_{L^q(A_0)}^q
    =\left(\int_{\frac{7}{8}}^{\frac{9}{8}}r^{n-1-3q}dr\right)
    \|\BmcF\|_{L^q(S^{n-1})}^q.
\end{equation*}
One then obtains \eqref{eq:fluxLq} from \eqref{eq:fluxannulus}.
\end{proof}

\section{Proof of Theorem \ref{thm:main}}\label{sec:mainresult}

The goal of this section is to prove the following proposition and  the main result, Theorem \ref{thm:main}.
\begin{pro}[Energy estimates]\label{Energy estimates2}
Assume $n\geq6$. Let $\Bu$ be a self-similar solution to \eqref{NS} such that $\Bu\in C^2(\mathbb R^n\setminus\{0\})$. Then we have
\begin{equation}\label{eq:energyes3}
       \mathcal E =  \int_{S^{n-1}}\left(|\nabla\Bu|^2+(n-4)|\Bu|^2\right)d\sigma
    \leq C(n)\left(N_f^{\frac32}+N_f^6\right),
\end{equation}
\begin{equation}\label{integralofH}
   \mathcal{H} =  \|H_+\|_{L^\theta(S^{n-1})}
    \leq C(n)\left(N_f+N_f^4\right),
\end{equation}
and
\begin{equation}\label{eq:energyes4}
    \|p\|_{L^{\frac{n-1}{n-3}}(S^{n-1})}
    +\|\nabla p\|_{L^{\frac{n-1}{n-2}}(S^{n-1})}
    \leq C(n)\left(N_f+N_f^6\right),
\end{equation}
where $N_f$ is defined in \eqref{defunifiedquantities}.
In particular, all the quantities on the left-hand sides above tend to zero as $N_f\to0$.
\end{pro}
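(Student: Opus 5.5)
We will close the estimates by combining the spherical energy inequality with the new bound on $\mathcal H$.

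\emph{Step 1: derive \eqref{eq:energyineq}.} Start from Lemma~\ref{halfenergyestimate}. The first term is bounded by $\int_{S^{n-1}}\Bf\cdot\Bu\,d\sigma\le CN_f\mathcal E^{1/2}$, and the last by $\frac{n-4}{2}\int f^ru^r_+\,d\sigma\le CN_f\mathcal E^{1/2}$. For the middle term, Hölder gives $\int H_+u^r_-\,d\sigma\le \|H_+\|_{L^2}\|u^r\|_{L^2}$. Since $\theta>n/2\ge 3>2$, we have $\|H_+\|_{L^2}\le C\mathcal H$. Lemma~\ref{lem:urestim} gives $\|u^r\|_{L^2}^2\le C(\|H_+\|_{L^1}+N_f)\le C(\mathcal H+N_f)$. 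Hence $\int H_+u^r_-\,d\sigma\le C\mathcal H(\mathcal H+N_f)^{1/2}\le C(\mathcal H+N_f)^{3/2}$, which yields
\[
\mathcal E\le CN_f\mathcal E^{1/2}+C(\mathcal H+N_f)^{3/2}.
\]

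\emph{Step 2: bound $\mathcal H$.} By \eqref{eq:headpre2}, $H$ satisfies \eqref{eq:BernoulliFluxIneq}, because the vorticity term is nonpositive, $\Bf\cdot\Bu\le|\Bf||\Bu|=g$, and $g-\div\Bf=-\div\BmcF$ by \eqref{eq:fluxdivergence}. The flux $\BmcF$ is $(-3)$-homogeneous and lies in $L^n_{\rm loc}$. Then Lemma~\ref{lem:fluxBernoulli} and Lemma~\ref{lem:sourcefluxSS} with $q=n$ give
\[
\mathcal H\le CN_f\bigl(1+N_f^{1/2}+\mathcal E^{1/2}+\mathcal H^{1/2}\bigr).
\]
Young's inequality $CN_f\mathcal H^{1/2}\le\frac12\mathcal H+CN_f^2$ absorbs the last term, so \eqref{eq:estforH+} holds: $\mathcal H\le C(N_f+N_f^2+N_f\mathcal E^{1/2})$.

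\emph{Step 3: close.} Insert \eqref{eq:estforH+} into the Step~1 inequality. This gives
\[
\mathcal E\le CN_f\mathcal E^{1/2}+C(N_f+N_f^2)^{3/2}+CN_f^{3/2}\mathcal E^{3/4}.
\]
Young's inequality absorbs the first and last terms:
\begin{itemize}
\item $CN_f\mathcal E^{1/2}\le\frac14\mathcal E+CN_f^2$;
\item $CN_f^{3/2}\mathcal E^{3/4}\le\frac14\mathcal E+CN_f^6$, using exponents $4$ and $4/3$.
\end{itemize}
Hence $\mathcal E\le C(N_f^{3/2}+N_f^2+N_f^3+N_f^6)\le C(N_f^{3/2}+N_f^6)$, since intermediate powers are dominated by the extremes. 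This is \eqref{eq:energyes3}. Then \eqref{integralofH} follows from \eqref{eq:estforH+}, since $N_f\mathcal E^{1/2}\le C(N_f^{7/4}+N_f^4)$ and $N_f^2\le N_f+N_f^4$.

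\emph{Step 4: pressure bounds.} The bound for $\|p\|_{L^{(n-1)/(n-3)}}$ is \eqref{eq:pauxendpoint}, i.e.\ $\le C(\mathcal E+N_f)\le C(N_f+N_f^6)$, using $N_f^{3/2}\le N_f+N_f^6$.

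For $\nabla p$, use Lemma~\ref{Lemmapressure} with $\beta=\frac{n-1}{n-2}$. Hölder gives $\|\Bu\cdot\nabla\Bu\|_{L^\beta}\le\|\Bu\|_{L^{2(n-1)/(n-3)}}\|\nabla\Bu\|_{L^2}$, because $\frac{n-3}{2(n-1)}+\frac12=\frac{n-2}{n-1}$. The Sobolev inequality on $S^{n-1}$ then bounds this by $C\mathcal E$. Therefore $\|\nabla p\|_{L^\beta}\le C(\mathcal E+N_f)\le C(N_f+N_f^6)$.

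The main point needing care is the Step~1 bound on the middle term $\int H_+u^r_-$ via Lemma~\ref{lem:urestim}, which must produce the $3/2$ power. Everything else is bookkeeping of exponents in Young's inequality.
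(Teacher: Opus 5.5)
Your proposal is correct and follows essentially the same route as the paper. You combine the spherical energy inequality from Lemmas~\ref{halfenergyestimate} and~\ref{lem:urestim} with the flux bound for $\mathcal H$ from Lemmas~\ref{lem:fluxBernoulli} and~\ref{lem:sourcefluxSS} (with $q=n$, followed by Young), absorb with exponent pairs $(2,2)$ and $(4,\tfrac43)$, and take the pressure bounds from \eqref{eq:pauxendpoint} and Lemma~\ref{Lemmapressure}. The only differences are cosmetic: you do the steps in a different order, and you bound $\int f^r u^r_+\,d\sigma$ by $N_f\mathcal E^{1/2}$ rather than by $N_f\|u^r\|_{L^2}$.
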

\begin{proof}
We first prove the following estimate, 
\begin{equation}\label{eq:Hclosureunified}
    \mathcal H\leq C\left(N_f+N_f^2+N_f\mathcal E^{\frac{1}{2}}\right),
\end{equation}
which is valid for every $n\geq6$.
It follows from \eqref{eq:headpre2} and \eqref{eq:defgPsiF} that
\begin{equation*}
    -\Delta H+\Bu\cdot\nabla H
    \leq |\Bf||\Bu|-\div\Bf
    =-\div\BmcF.
\end{equation*}
Applying Lemmas \ref{lem:fluxBernoulli} and \ref{lem:sourcefluxSS} with $q=n$ gives
\begin{equation*}
    \mathcal H
    \leq CN_f\left(1+N_f^{\frac{1}{2}}+\mathcal E^{\frac{1}{2}}+\mathcal H^{\frac{1}{2}}\right).
\end{equation*}
By Young's inequality, we obtain \eqref{eq:Hclosureunified}.

We now close the energy estimates as follows. Lemma \ref{lem:urestim} and H\"older's inequality give
\begin{equation}\label{eq:urunified}
    \|u^r\|_{L^2(S^{n-1})}^2\leq C(\mathcal H+N_f).
\end{equation}
By Lemma \ref{halfenergyestimate}, H\"older's inequality, $\theta'<2$ (recall $\theta'$ is the H\"older conjugate of $\theta$), and \eqref{eq:urunified}, one has
\begin{align}\label{eq:Eclosureunified}
    \mathcal E
    &\leq CN_f\mathcal E^{\frac{1}{2}}
    +C\mathcal H\|u^r\|_{L^2(S^{n-1})}
    +CN_f\|u^r\|_{L^2(S^{n-1})}
    \leq CN_f\mathcal E^{\frac{1}{2}}+C(\mathcal H+N_f)^{\frac{3}{2}}.
\end{align}
Substituting \eqref{eq:Hclosureunified} into \eqref{eq:Eclosureunified} yields
\begin{equation*}
    \mathcal E
    \leq CN_f\mathcal E^{\frac{1}{2}}
    +C\left(N_f+N_f^2+N_f\mathcal E^{\frac{1}{2}}\right)^{\frac{3}{2}}.
\end{equation*}
Using the inequality $(a+b+c)^{\frac{3}{2}}\leq C(a^{\frac{3}{2}}+b^{\frac{3}{2}}+c^{\frac{3}{2}})$ and 
\begin{equation*}
    CN_f\mathcal E^{\frac{1}{2}}\leq\frac14\mathcal E+CN_f^2,
    \quad
    CN_f^{\frac{3}{2}}\mathcal E^{\frac{3}{4}}\leq\frac14\mathcal E+CN_f^6,
\end{equation*}
it follows that
\begin{equation*}
    \mathcal E\leq C\left(N_f^{\frac{3}{2}}+N_f^2+N_f^3+N_f^6\right)
    \leq C\left(N_f^{\frac{3}{2}}+N_f^6\right).
\end{equation*}
This proves \eqref{eq:energyes3}. Substituting \eqref{eq:energyes3} into \eqref{eq:Hclosureunified}, 
we obtain
\begin{equation*}
    \mathcal H\leq C(N_f+N_f^4),
\end{equation*}
which is \eqref{integralofH}.

For the pressure, \eqref{eq:pauxendpoint} already gives $\|p\|_{L^{\frac{n-1}{n-3}}(S^{n-1})}\leq C(\mathcal E+N_f)$.
Applying \eqref{eq:pressure-potential-estimates} and the H\"older's and Sobolev inequalities, we get
\begin{align*}
    \|\nabla p\|_{L^{\frac{n-1}{n-2}}(S^{n-1})}
    &\leq C\|\Bu\cdot\nabla\Bu\|_{L^{\frac{n-1}{n-2}}(S^{n-1})}+CN_f\\
    &\leq C\|\Bu\|_{L^{\frac{2(n-1)}{n-3}}(S^{n-1})}
    \|\nabla\Bu\|_{L^2(S^{n-1})}+CN_f\\
    &\leq C\mathcal E+CN_f.
\end{align*}
This, together with \eqref{eq:energyes3}, gives
\eqref{eq:energyes4}. Hence the proof of Proposition \ref{Energy estimates2} is completed.
\end{proof}

\begin{proof}[Proof of Theorem \ref{thm:main}]
	With
	a priori estimate established in Proposition~\ref{Energy estimates2}, the rest of the proof is
	the same as \cite[Section~3]{Hao25}.
	In particular, the construction and compactness of the Stokes map,
	the Leray-Schauder  argument, the regularity,
	and the small-data uniqueness argument can be carried out without any change.
	We therefore omit these  details and refer the reader to
	\cite[Section~3]{Hao25} for the proof of Theorem \ref{thm:main}.
\end{proof}

\medskip

{\bf Acknowledgements.} 
The research of Gui is supported by  NSFC Key Program (Grant No. 12531010),  University of Macau research grants CPG2024-00016-FST, CPG2025-00032-FST, CPG2026-00027-FST, SRG2023-00011-FST, MYRG-GRG2023-00139-FST-UMDF, UMDF Professorial Fellowship of Mathematics, Macao SAR FDCT 0003/2023/RIA1 and  Macao SAR FDCT 0024/2023/RIB1.
The research of Wang is partially supported by NSFC Grant 12671279, and the Natural Science Foundation of Jiangsu Province (Grant~BK20240147).
The research of Xie is partially supported by NSFC Grants 12571238 and 12426203.

	\bibliographystyle{abbrv}
	\bibliography{steadyNS}
\end{document}